\tikzset{snake it/.style={decorate, decoration=snake}}
\newtheorem{theorem}{Theorem}[section]
\newtheorem{lemma}[theorem]{Lemma}
\newtheorem{cor}[theorem]{Corollary}
\newtheorem{proposition}[theorem]{Proposition}
\newtheorem{prob}[theorem]{Problem}
\newcommand{\repeatlabel}{}
\newtheorem*{repeatlemma}{Lemma \repeatlabel}
\theoremstyle{definition}
\newtheorem{rmk}[theorem]{Remark}
\newtheorem{claim}[theorem]{Claim}
\newtheorem{question}[theorem]{Question}
\newenvironment{poc}{\begin{proof}[Proof of claim]}{\end{proof}}
\newcommand{\bN}{\mathbb{N}}
\newcommand{\cH}{\mathcal{H}}
\newcommand{\cF}{\mathcal{F}}
\title{Fractional Helly theorem for Cartesian products of convex sets}
\author{Debsoumya Chakraborti\thanks{
Discrete Mathematics Group, Institute for Basic Science (IBS), South Korea.
E-mail: {\tt \{debsoumya, jinhakim, minkikim\}@ibs.re.kr}. Debsoumya Chakraborti, Jinha Kim, and Minki Kim were supported by the Institute for Basic Science (IBS-R029-C1).}\and 
Jaehoon Kim\thanks{Department of Mathematical Sciences, KAIST, South Korea. E-mail: {\tt jaehoon.kim@kaist.ac.kr}. Jaehoon Kim was supported by the POSCO Science Fellowship of POSCO TJ Park Foundation and by the KAIX Challenge program of KAIST Advanced Institute for Science-X.}\and
Jinha Kim\footnotemark[1]\and
Minki Kim\footnotemark[1]~\thanks{Corresponding author.}\and
Hong Liu\thanks{Mathematics Institute, University of Warwick, UK. E-mail: {\tt h.liu.9@warwick.ac.uk}. Hong Liu was supported by the UK Research and Innovation Future Leaders Fellowship MR/S016325/1.} }
\date{\today}
\begin{document}
\maketitle

\begin{abstract}
Helly's theorem and its variants show that for a family of convex sets in Euclidean space, local intersection patterns influence global intersection patterns. A classical result of Eckhoff in 1988 provided an optimal fractional Helly theorem for axis-aligned boxes, which are Cartesian products of line segments. Answering a question raised by B\'ar\'any and Kalai, and independently Lew,  we generalize Eckhoff's result to Cartesian products of convex sets in all dimensions.
   
In particular, we prove that given $\alpha \in (1-\frac{1}{t^d},1]$ and a finite family $\mathcal{F}$ of Cartesian products of convex sets $\prod_{i\in[t]}A_i$ in $\mathbb{R}^{td}$ with $A_i\subset \mathbb{R}^d$, if at least $\alpha$-fraction of the $(d+1)$-tuples in $\mathcal{F}$ are intersecting, then at least $(1-(t^d(1-\alpha))^{1/(d+1)})$-fraction of sets in $\mathcal{F}$ are intersecting. This is a special case of a more general result on intersections of $d$-Leray complexes. We also provide a construction showing that our result on $d$-Leray complexes is optimal. Interestingly, the extremal example is representable as a family of cartesian products of convex sets, implying that the bound $\alpha>1-\frac{1}{t^d}$ and the fraction $(1-(t^d(1-\alpha))^{1/(d+1)})$ above are also best possible.
   
The well-known optimal construction for fractional Helly theorem for convex sets in $\mathbb{R}^d$ does not have $(p,d+1)$-condition for sublinear $p$, that is, it contains a linear-size subfamily with no intersecting $(d+1)$-tuple. Inspired by this, we give constructions showing that, somewhat surprisingly, imposing additional $(p,d+1)$-condition has negligible effect on improving the quantitative bounds in neither the fractional Helly theorem for convex sets nor Cartesian products of convex sets. Our constructions offer a rich family of distinct extremal configurations for fractional Helly theorem, implying in a sense that the optimal bound is stable.
\end{abstract}

\section{Introduction}
A family of non-empty sets is {\em intersecting} if all sets within have an element in common. Let $\mathcal{F}$ be a (possibly infinite) family of non-empty sets.
The {\em Helly number} of $\mathcal{F}$ is the minimal size of a subfamily $\cH$ such that every proper subfamily of $\cH$ is intersecting but $\mathcal{H}$ itself is not intersecting.
Helly's theorem~\cite{Hel23}, one of the most classical result about intersection patterns of convex sets in Euclidean spaces, asserts that the family of all convex sets in $\mathbb{R}^d$ has Helly number $d+1$.

There are a large number of variants and applications of Helly's theorem.
See \cite{ALS17} for an overview on such Helly type theorems.
One of the most important generalizations of Helly's theorem is the fractional Helly theorem, showing that if we only assume a positive fraction of the $(d+1)$-tuples are intersecting, then there is still a large intersecting subfamily. 

More precisely, the fractional Helly theorem asserts that for every positive integer $d$, there exists a function $\beta_d:(0,1]\to(0,1]$ such that for every $\alpha \in (0,1]$ and finite family $\mathcal{F}$ of convex sets in $\mathbb{R}^d$, if at least $\alpha\binom{|\mathcal{F}|}{d+1}$ of the $(d+1)$-tuples of $\mathcal{F}$ are intersecting, then $\mathcal{F}$ contains an intersecting subfamily of size at least $\beta_d(\alpha)|\mathcal{F}|$.
The fractional Helly theorem was first shown by Katchalski and Liu~\cite{KL79} with a lower bound $\beta_d(\alpha) \geq \frac{\alpha}{d+1}$.
When $d = 1$, it was shown by Abbot and Katchalski~\cite{AK79} that the optimal bound is $\beta_1(\alpha) = 1-\sqrt{1-\alpha}$.
Later, Kalai~\cite{Kal84} and Eckhoff~\cite{Eck85} proved the optimal bound for all dimention: $\beta_d(\alpha) = 1-(1-\alpha)^{1/(d+1)}$. See also~\cite{AK85} for a simple proof, which uses a set pair inequality~\cite{Alon85}.
In fact, the $\beta_d(\alpha)|\mathcal{F}|$ bound is only asymptotically tight and we can add a positive $o(|\mathcal{F}|)$ term.
Indeed, these results are all proved in the following exact form. 
\begin{theorem}[The fractional Helly Theorem]
Let $d,r$, and $n$ be positive integers such that $n>d+r$, and $\mathcal{F}$ is a family of $n$ convex sets in $\mathbb{R}^d$.
If more than $\binom{n}{d+1}-\binom{n-r}{d+1}$ of the $(d+1)$-tuples of the family $\mathcal{F}$ are intersecting, then $\mathcal{F}$ contains an intersecting subfamily of size at least $d+r+1$.
\end{theorem}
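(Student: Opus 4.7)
The plan is to convert the convex-geometric hypothesis into a purely combinatorial statement about simplicial complexes via Helly's theorem, and then invoke an upper bound theorem for the number of top-dimensional faces in $d$-Leray complexes with bounded clique number.

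First I would form the \emph{nerve} $\mathcal{N}(\cF)$: the simplicial complex whose vertex set is $\cF$ and whose faces are the intersecting subfamilies. Helly's theorem, applied to every induced subcomplex, implies that $\mathcal{N}(\cF)$ is \emph{$d$-Leray}, meaning $\tilde H_i(\mathcal{N}(\cF)[S]; \mathbb{Q}) = 0$ for every vertex subset $S$ and every $i \geq d$. Under this translation, intersecting $(d+1)$-tuples of $\cF$ correspond bijectively to $d$-dimensional faces of $\mathcal{N}(\cF)$, and intersecting subfamilies of size $d+r+1$ correspond to faces of dimension $d+r$. Taking the contrapositive, the theorem reduces to the following combinatorial claim: if $K$ is a $d$-Leray complex on $n$ vertices with no face of size $d+r+1$, then $f_d(K) \leq \binom{n}{d+1} - \binom{n-r}{d+1}$.

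To establish this bound, which is Kalai's Upper Bound Theorem for $d$-Leray complexes, I would pass to the symmetric algebraic shift $\Delta(K)$. Two ingredients are essential: algebraic shifting preserves the $f$-vector, and the $d$-Leray property is preserved under shifting. Crucially, $\Delta(K)$ is \emph{shifted}: swapping any vertex of a face for a smaller unused vertex yields another face. For shifted complexes, being $d$-Leray admits an explicit combinatorial characterization forcing every face to meet a small initial segment of the vertex order. Combining this with the absence of a face of size $d+r+1$ and a short argument exploiting shiftedness shows that every $d$-face of $\Delta(K)$ must intersect $[r] = \{1, \ldots, r\}$. Since the number of $(d+1)$-subsets of $[n]$ meeting $[r]$ is exactly $\binom{n}{d+1}-\binom{n-r}{d+1}$, this yields the desired bound on $f_d(\Delta(K)) = f_d(K)$.

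The main obstacle is the preservation of the $d$-Leray property under algebraic shifting. This is a non-trivial theorem of Kalai, proved via the relation between Leray numbers and the vanishing of bigraded Betti numbers of the Stanley--Reisner ring (or equivalently the exterior face ring), so that one can control them through generic initial ideals. An alternative route would be to avoid algebraic shifting altogether and instead run the Alon--Kalai argument encoding each non-intersecting $(d+1)$-tuple by a Bollob\'as-type set pair and applying the skew set-pair inequality; this gives a cleaner, more elementary proof at the cost of less structural insight. In either case, once the combinatorial core is in place, the reduction through the nerve is immediate, and the extremal count $\binom{n}{d+1}-\binom{n-r}{d+1}$ emerges naturally as the number of $(d+1)$-subsets of $[n]$ meeting a fixed set of $r$ vertices.
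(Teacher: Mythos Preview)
The paper does not supply its own proof of this statement; it is quoted as a known theorem with references to Kalai~\cite{Kal84}, Eckhoff~\cite{Eck85}, and (for the set-pair approach) Alon--Kalai~\cite{AK85}. The $d$-Leray reformulation you arrive at is precisely the inequality the paper labels~\eqref{leray_frachel} and attributes to~\cite{AKMM02}, which it then uses as a black box in the proof of Theorem~\ref{thm:leray_intersection}. So there is nothing to compare against here: your proposal accurately outlines the two classical arguments that the paper cites rather than reproduces, and the reduction-to-Leray step is exactly how the paper itself packages the result.

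One correction is worth making. You write that ``Helly's theorem, applied to every induced subcomplex, implies that $\mathcal{N}(\cF)$ is $d$-Leray.'' Helly's theorem only tells you that every minimal non-face of the nerve has size at most $d+1$; it says nothing about homology. The $d$-Lerayness comes instead from the nerve theorem: each induced subcomplex of $\mathcal{N}(\cF)$ is itself the nerve of a subfamily of convex sets, hence homotopy equivalent to their union, which is a subset of $\mathbb{R}^d$ and so has vanishing homology in degrees $\geq d$. Alternatively one can invoke Wegner's theorem that such nerves are $d$-collapsible, hence $d$-Leray. This is the genuine topological input and is strictly stronger than what Helly provides; without it the shifting or set-pair argument has nothing to bite on.
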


\subsection{Cartesian product of convex sets}
While the convex sets in $\mathbb{R}^d$ has Helly number $d+1$, a proper subfamily of it may have a smaller Helly number.
We observe the following fractional Helly type statement for such families of convex sets in $\mathbb{R}^d$.
\begin{proposition}\label{thm:frachel_smallhelly}
Let $d$ and $r$ be positive integers such that $d+1 \geq r \geq 2$.
Then there exist $c_{d+1,r} \in [0,1)$ and $\beta_{d+1,r}:(c_{d+1,r},1]\to(0,1]$ such that for every (possibly infinite) family $\mathcal{F}$ of convex sets in $\mathbb{R}^d$ with Helly number $r$ the following holds: for every finite subfamily $\mathcal{G}$ of $\mathcal{F}$ and $\alpha \in (c_{d+1,r},1]$, if at least $\alpha\binom{|\mathcal{G}|}{r}$ of the $r$-tuples are intersecting, then $\mathcal{G}$ contains an intersecting subfamily of size $\beta_{d+1,r}(\alpha)|\mathcal{G}|$.
\end{proposition}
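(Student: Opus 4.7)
My plan is to reduce Proposition~\ref{thm:frachel_smallhelly} to the classical fractional Helly theorem (applied with $(d+1)$-tuples) by first upgrading from intersecting $r$-tuples to intersecting $(d+1)$-tuples via the Helly-number hypothesis. The key observation is that $\mathcal{F}$ having Helly number $r$ means that whenever every $r$-subfamily of a finite $\mathcal{H} \subseteq \mathcal{F}$ is intersecting, the whole $\mathcal{H}$ is intersecting; in particular, a $(d+1)$-tuple in $\mathcal{G}$ is intersecting as soon as all $\binom{d+1}{r}$ of its $r$-subsets are intersecting.

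Fix a finite $\mathcal{G} \subseteq \mathcal{F}$ with $|\mathcal{G}|=n$ and at least $\alpha\binom{n}{r}$ intersecting $r$-tuples. I would double-count pairs $(R,T)$ where $R$ is a non-intersecting $r$-subset of $\mathcal{G}$ and $T$ is a $(d+1)$-superset of $R$. Using the standard identity $\binom{n}{r}\binom{n-r}{d+1-r} = \binom{d+1}{r}\binom{n}{d+1}$, this shows that the number of $(d+1)$-tuples of $\mathcal{G}$ containing at least one non-intersecting $r$-subset is at most $\binom{d+1}{r}(1-\alpha)\binom{n}{d+1}$. Consequently, the number of $(d+1)$-tuples in $\mathcal{G}$ all of whose $r$-subsets intersect is at least
\[
\Bigl(1-\binom{d+1}{r}(1-\alpha)\Bigr)\binom{n}{d+1},
\]
and by the Helly-number-$r$ hypothesis each such $(d+1)$-tuple is itself intersecting.

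Setting $c_{d+1,r} := 1 - 1/\binom{d+1}{r}$ guarantees that whenever $\alpha > c_{d+1,r}$ the quantity $\alpha' := 1-\binom{d+1}{r}(1-\alpha)$ lies in $(0,1]$. I would then apply the classical fractional Helly theorem with parameter $\alpha'$ to $\mathcal{G}$ in $\mathbb{R}^d$ to extract an intersecting subfamily of size at least $\bigl(1-(1-\alpha')^{1/(d+1)}\bigr)n$, giving the explicit bound
\[
\beta_{d+1,r}(\alpha) \;=\; 1 - \Bigl(\binom{d+1}{r}(1-\alpha)\Bigr)^{1/(d+1)}.
\]
Note that at $r=d+1$ this recovers the classical constants $c_{d+1,d+1}=0$ and $\beta_{d+1,d+1}(\alpha)=1-(1-\alpha)^{1/(d+1)}$, as a sanity check.

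I do not foresee a significant obstacle, as the proof is essentially a one-step reduction: a union bound on $(d+1)$-tuples followed by invoking the classical fractional Helly theorem as a black box. Convexity plays no explicit role beyond this final invocation, so in fact the same argument yields the analogous statement for any set system whose nerve is $d$-Leray and whose Helly number is $r$.
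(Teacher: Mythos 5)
Your proof is correct and follows essentially the same route as the paper, which handles this proposition via the generalized Proposition~\ref{thm:frachel_smallhelly_abstract} (sketched in the concluding remarks): use the high density of intersecting $r$-tuples together with the Helly-number-$r$ hypothesis to deduce a positive density of intersecting $(d+1)$-tuples, then invoke the fractional Helly theorem for $(d+1)$-tuples as a black box. Your union bound over $r$-subsets simply makes explicit the quantitative constants $c_{d+1,r}=1-1/\binom{d+1}{r}$ and $\beta_{d+1,r}(\alpha)=1-\bigl(\binom{d+1}{r}(1-\alpha)\bigr)^{1/(d+1)}$, which the paper's sketch leaves qualitative.
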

Note that the fractional Helly theorem is the special case $r=d+1$ with $c_{d+1,d+1}=0$ and $\beta_{d+1,d+1}(\alpha)= 1-(1-\alpha)^{1/(d+1)}$. When $r < d+1$,  the assumption that $\mathcal{F}$ has Helly number $r$ above is necessary.
Consider for instance a family of hyperplanes in $\mathbb{R}^d$ in general position.
Here, hyperplanes in $\mathbb{R}^d$ are {\em in general position} if for every collection of $m \leq d$ hyperplanes, their intersection is a $(d-m)$-flat and every $d+1$ or more hyperplanes have no point in common.
It is clear that there are no intersecting subfamily of size larger than $d$ while every $d$-tuple is intersecting.

One of the most well-studied examples of families of convex sets with small Helly number is the family of Cartesian products of convex sets.
For positive integers $t$ and $d$, let $\mathcal{F}_{t,d}$ be the family of all convex sets of the form $A_1\times A_2\times\cdots\times A_t \subset \mathbb{R}^d\times\mathbb{R}^d\times\cdots\times\mathbb{R}^d \simeq \mathbb{R}^{td}$, where each $A_i$ is a convex set in $\mathbb{R}^d$.
Note that $\mathcal{F}_{t,d}$ has Helly number $d+1 \leq td+1$, hence Proposition~\ref{thm:frachel_smallhelly} applies. In particular, $\cF_{t,1}$ is the family of axis-aligned boxes.  A classical result of Eckhoff~\cite{Eck88} in 1988 provided a quantitatively optimal version of Proposition~\ref{thm:frachel_smallhelly} for axis-aligned boxes $\cF_{t,1}$ with $c_{td+1,2}=1-\frac{1}{t}$ and $\beta_{td+1,2}(\alpha)=1-\sqrt{t(1-\alpha)}$.

Answering a question raised by B\'ar\'any and Kalai~\cite[Problem 3.7]{BK21} and independently Lew~\cite{Lew}, we generalize Eckhoff's theorem on axis-aligned boxes to higher dimension, proving a quantitatively optimal fractional Helly type theorem for Cartesian products of convex sets in all dimensions.

\begin{theorem}\label{thm:frachel_genbox}
Let $t$ and $d$ be positive integers.
Let $\mathcal{F}$ be a finite subfamily of $\mathcal{F}_{t,d}$.
For every $\alpha \in (1-\frac{1}{t^d},1]$, if at least $\alpha\binom{|\mathcal{F}|}{d+1}$ of the $(d+1)$-tuples are intersecting, then $\mathcal{F}$ contains an intersecting subfamily of size $(1-(t^d(1-\alpha))^{1/(d+1)})|\mathcal{F}|$.
\end{theorem}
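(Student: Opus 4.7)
The plan is to first reduce to a purely combinatorial statement about $d$-Leray complexes. Writing $F^j = A^j_1 \times \cdots \times A^j_t \in \mathcal{F}$ for $j \in [n]$ with $n = |\mathcal{F}|$, let $K_i$ denote the nerve of the family $\{A^j_i : j \in [n]\}$ of convex sets in $\mathbb{R}^d$ for each $i \in [t]$. By Wegner's theorem each $K_i$ is a $d$-Leray complex on the vertex set $[n]$. Since a subfamily of $\mathcal{F}$ is intersecting if and only if it is intersecting in every coordinate, the nerve of $\mathcal{F}$ is the face-wise intersection $K := K_1 \cap K_2 \cap \cdots \cap K_t$; intersecting $(d+1)$-tuples of $\mathcal{F}$ correspond to $(d+1)$-faces of $K$, and intersecting subfamilies correspond to common cliques of the $K_i$. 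Theorem~\ref{thm:frachel_genbox} is therefore a consequence of the following $d$-Leray statement advertised in the abstract: if $K_1, \ldots, K_t$ are $d$-Leray complexes on $[n]$ and $K$ has at least $\alpha \binom{n}{d+1}$ $(d+1)$-faces with $\alpha > 1 - t^{-d}$, then the $K_i$ share a common clique of size at least $(1 - (t^d(1-\alpha))^{1/(d+1)})n$.

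The main quantitative tool is Kalai's upper bound theorem for $d$-Leray complexes, which is the exact form of the Fractional Helly Theorem stated earlier. Writing $B_i$ for the set of $(d+1)$-subsets missing from $K_i$, one has $|B_i| \leq |\bigcup_{j} B_j| = (1-\alpha) \binom{n}{d+1}$, and Kalai produces a clique in each $K_i$ of size at least $(1-(1-\alpha)^{1/(d+1)})n$. A direct intersection of these individual cliques produces a common clique of size only $(1-t(1-\alpha)^{1/(d+1)})n$, giving the weaker threshold $\alpha > 1-t^{-(d+1)}$; sharpening this by a factor of $t^{1/(d+1)}$ is the crux of the proof.

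To reach the sharp bound, I would iterate Kalai along a chain of restrictions $V_0 = [n] \supseteq V_1 \supseteq \cdots \supseteq V_t$, where $V_i$ is a clique in $K_i|_{V_{i-1}}$. At each step, $K_i|_{V_{i-1}}$ inherits $d$-Lerayness and has at most $(1-\alpha)\binom{n}{d+1}$ missing $(d+1)$-faces (since these are contained in the missing $(d+1)$-faces of $K$), so Kalai provides a large clique. Naive accounting again reproduces only the $t$-factor bound; the plan is to either optimize the choice of $V_i$ (not always the maximum clique) via a Jensen- or AM-GM-type optimization of the shrinkages at successive steps, or, equivalently, to establish the refined Kalai-type inequality $|\text{missing }(d+1)\text{-faces of } K| \geq \binom{n - \omega(K) + d}{d+1}/t^d$ for intersections of $d$-Leray complexes. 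This inequality immediately yields Theorem~\ref{thm:frachel_genbox} by rearrangement, and is tight on the extremal examples of the paper (e.g.\ the $t$-class partition construction generalising Eckhoff's example, which matches both sides).

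The main obstacle is precisely this sharpening from $t$ to $t^{d/(d+1)}$: naive iteration or union bounds only give $t$, and the improvement requires genuinely using the intersection structure rather than only the individual Leray properties of the $K_i$. A promising route is algebraic shifting of each $K_i$ (preserving $d$-Lerayness while yielding an explicit combinatorial description of the shifted complex), after which the intersection of the shifted complexes can be analysed face by face to extract the refined inequality above. Once that inequality is in place, Theorem~\ref{thm:frachel_genbox} follows by applying it to the $K_i$ from the reduction with $n = |\mathcal{F}|$.
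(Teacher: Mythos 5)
Your reduction is exactly the one the paper uses (Remark~\ref{rmk:Hellynumber}): the nerve of $\mathcal{F}$ is the intersection of the $t$ coordinate nerves, each $d$-Leray, so everything comes down to a sharp bound on $f_d$ of an intersection of $d$-Leray complexes. Your chain of restrictions $V_0\supseteq V_1\supseteq\cdots\supseteq V_t$ with $V_i$ a clique (face) of $K_i[V_{i-1}]$ is also the skeleton of the paper's proof of Theorem~\ref{thm:leray_intersection}. However, there is a genuine gap at precisely the point you flag as the crux: you bound the missing $(d+1)$-faces of each restricted complex $K_i[V_{i-1}]$ separately by the global quantity $(1-\alpha)\binom{n}{d+1}$, which, as you note, only yields the factor $t$ rather than $t^{d/(d+1)}$, and your proposed fixes (optimizing the choice of $V_i$, or algebraic shifting to prove the refined inequality) are left entirely unexecuted -- you state the refined inequality you would need but give no argument for it.

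The missing idea is a disjointness-plus-summation argument, not a per-step union bound. If $V_i$ is taken to be a \emph{maximum} face of $K_i[V_{i-1}]$, say $|V_i| = d + r_i$, then \eqref{leray_frachel} applied to $K_{i+1}[V_i]$ produces at least $\binom{|V_i|-r_{i+1}}{d+1}=\binom{d+r_i-r_{i+1}}{d+1}$ many $(d+1)$-sets inside $V_i$ that are not faces of $K_{i+1}$; crucially, these sets are subsets of $V_i$, hence faces of $K_1,\ldots,K_i$, so they are disjoint from the missing families collected at all earlier steps. All of these families consist of non-faces of $K$, so their sizes \emph{add}:
\begin{equation*}
(1-\alpha)\binom{n}{d+1}\;\geq\;\bar f_d(K)\;\geq\;\binom{n-r_1}{d+1}+\sum_{j=1}^{t-1}\binom{d+r_j-r_{j+1}}{d+1},
\end{equation*}
where the arguments on the right sum to at least $n-(d+r_t)+td$ with $d+r_t=|V_t|$ and $V_t\in K$. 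Karamata/Jensen convexity (Proposition~\ref{prop:jensen}) then shows the right-hand side is minimized when the arguments are balanced, i.e.\ it is at least roughly $t\binom{(n-|V_t|+td)/t}{d+1}\approx\binom{n-|V_t|+td}{d+1}/t^d$, which is exactly your "refined Kalai-type inequality" and gives the clique bound $(1-(t^d(1-\alpha))^{1/(d+1)})n$. So the improvement from $t$ to $t^{d/(d+1)}$ requires no shifting machinery at all; it falls out of your own iteration scheme once you observe that the deficiencies found at successive steps live inside nested faces and can therefore be summed before invoking convexity, rather than each being compared to the total deficiency. Without this observation (or a proof of the refined inequality by some other route), the proposal does not establish the stated bound.
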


In addition, we provide a construction that shows the condition $\alpha > 1-\frac{1}{t^d}$ in the assumption and the fraction $1-(t^d(1-\alpha))^{1/(d+1)}$ in the conclusion cannot be improved.
Indeed, we prove both the above theorem and the construction in their exact forms, see Theorem~\ref{thm:leray_intersection} and Theorem~\ref{thm:frachel_genbox2}.

\subsection{Intersection of $d$-Leray complexes}
The assertions of Helly's theorem and its colorful~\cite{KM05} and fractional~\cite{Kal84} generalizations hold for more general set-systems, which satisfy certain topological conditions.
One important such topological condition is the hereditary homological dimension ($d$-Lerayness) of the simplicial complex (nerve) that reflects all the intersection pattern of a given family.

An {\em abstract simplicial complex} $X$ with ground set $V$ is a collection of subsets of $V$ that is closed under taking subsets: that is, if $\sigma$ and $\tau$ are subsets of $V$ such that $\sigma \subset \tau \in X$, then $\sigma \in X$.
Each $\sigma \in X$ is called a {\em face}, or a {\em simplex} of $X$ and each $v\in V$ is called a {\em vertex} of $X$.
For $W \subset V$, we denote by $X[W]$ the subcomplex of $X$ induced by $W$, that is,
\[X[W] := \{\sigma \in X: \sigma \subset W\}.\]
When $V$ is finite, $X$ is {\em $d$-Leray} if the reduced homology groups in dimension at least $d$ are trivial for all induced subcomplexes, that is, $\tilde{H}_i(X[W];\mathbb{Q}) = 0$ for all $i \geq d$ and $W \subset V$.
Given a family $\mathcal{F}$ of non-empty sets, the {\em nerve} of $\mathcal{F}$ is the simplicial complex
\[N(\mathcal{F}) := \{\mathcal{H} \subset \mathcal{F}: \mathcal{H} = \varnothing\text{ or }\mathcal{H}\text{ is intersecting}\}.\]
It is a well-known fact that the nerve of a finite family of convex sets in $\mathbb{R}^d$ is $d$-Leray.
See \cite{Tan13} for an overview on Helly type theorems and Leray complexes.

It is straightforward from the definition of the $d$-Lerayness that given a $d$-Leray complex $K$, every vertex subset $W$ with $\binom{W}{d+1} \subset K$ is a face of $K$.
This shows that every family of non-empty sets has Helly number at most $d+1$ if the nerve of any finite subfamily is $d$-Leray.
The optimal fractional Helly theorem for convex sets in $\mathbb{R}^d$ also can be generalized to $d$-Leray complexes~\cite[Theorem 13]{AKMM02}:
if $K$ is a $d$-Leray complex on $n$ vertices, then 
\begin{align}\label{leray_frachel}
\dim K < d+r \implies f_d(K) \leq \binom{n}{d+1} - \binom{n-r}{d+1},
\end{align}
where $f_d(K)$ is the number of $d$-dimensional faces of $K$ and the dimension $\dim K $ of the complex $K$ is the size of largest face minus $1$.

We study the following natural question on the intersection of $d$-Leray complexes.
Write $[t] := \{1,2,\ldots,t\}$. Let $K_i$, $i\in[t]$, be $n$-vertex $d$-Leray complexes and consider their intersection $\bigcap_{i\in [t]} K_i$. 
If $\bigcap_{i\in [t]} K_i$ has dimension less than $d+r$, how many $d$-dimensional faces can it have? Consider the following construction: 
For $r\geq (t-1)d$ and $n> r+d$, partition a set $V$ of $n$ vertices into $V_0, V_1,\dots, V_t$ with size $r-(t-1)d, n_1,\dots, n_t$ respectively and let any set $U\subseteq V$ satisfying $|U\cap V_i|\leq d$ for all $i\in [t]$ be a face.
If we optimize over the choices of $n_1,\dots, n_t$ with $n_1+\dots + n_t = n-r+(t-1)d$, we obtain a simplicial complex with 
$$g_d(n,t,r) = \binom{n}{d+1} - \min_{n_1,\dots, n_t} \sum_{i\in [t]} \binom{n_i}{d+1}$$
$d$-dimensional faces. The value of $g_d(n,t,r)$ is attained when all $n_1,\dots, n_t$ are as close as possible, i.e. $|n_i-n_j|\leq 1$ for all $i,j\in [t]$.
We will see later that such a simplicial complex is an intersection of $t$ many $d$-Leray complexes and has the dimension exactly $d+r-1$.
Denote by $K_d(n,t,r)$ the above complex with the optimal choice of $n_1,\dots, n_t$. 

We show that given the dimension of the intersection of $d$-Leray complexes, $g_d(n,t,r)$ bounds the number of $d$-dimensional faces from above. Thus, the construction $K_d(n,t,r)$ is an extremal example.

\begin{theorem}\label{thm:leray_intersection}
Let $d$, $r$, $t$, and $n$ be positive integers such that $n > d+r$ and $r \geq (t-1)d$.
Let $V$ be a set of $n$ vertices, and let $K_1,\ldots,K_t$ be $d$-Leray complexes on $V$. If the intersection $K = \bigcap_{i=1}^{t}K_i$ has dimension $\dim K < d+r$, then 
\[f_d(K) \leq g_d(n,t,r).\]
Moreover, the above upper bound is best possible, that is, there exists such $K$ that satisfies the equality.
\end{theorem}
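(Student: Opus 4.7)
The theorem asserts the upper bound $f_d(K)\leq g_d(n,t,r)$ together with a matching extremal construction; I address them in turn, treating the upper bound by induction on $d$.

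\emph{Construction.} Partition $V=V_0\sqcup V_1\sqcup\cdots\sqcup V_t$ with $|V_0|=r-(t-1)d$ and $|V_i|=n_i$ chosen as balanced as possible subject to $\sum_{i=1}^{t}n_i=n-r+(t-1)d$, and set $K_i:=\{U\subseteq V:|U\cap V_i|\leq d\}$ for each $i\in[t]$. Each $K_i$ decomposes as the join $\Delta_{V\setminus V_i}\ast V_i^{(d-1)}$ of the full (contractible) simplex on $V\setminus V_i$ with the $(d-1)$-skeleton of the simplex on $V_i$; for any $W=A\sqcup B$ with $A\subseteq V\setminus V_i$ and $B\subseteq V_i$, the induced subcomplex $K_i[W]$ is contractible when $A\neq\varnothing$ and equals $B^{(d-1)}$ when $A=\varnothing$, so $\tilde{H}_j(K_i[W])=0$ for all $j\geq d$ and each $K_i$ is $d$-Leray. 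The intersection $K=\bigcap_i K_i=\{U:|U\cap V_i|\leq d \text{ for all } i\}$ has $\dim K=(r-(t-1)d)+td-1=r+d-1<d+r$, attained by $V_0$ together with any $d$-subset of each $V_i$; the $(d+1)$-subset non-faces of $K$ are exactly those lying entirely inside some $V_i$, and these are pairwise disjoint, giving $f_d(K)=\binom{n}{d+1}-\sum_i\binom{n_i}{d+1}=g_d(n,t,r)$.

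\emph{Upper bound --- base case.} For $d=0$, a $0$-Leray complex is the full simplex on its vertex set, so each $K_i=\Delta_{V(K_i)}$ for some $V(K_i)\subseteq V$ and $K=\Delta_{\bigcap_i V(K_i)}$; the hypothesis $\dim K<r$ forces $|\bigcap_i V(K_i)|\leq r$, hence $f_0(K)\leq r=g_0(n,t,r)$.

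\emph{Upper bound --- inductive step and main obstacle.} The natural starting point is the identity $(d+1)f_d(K)=\sum_{v\in V}f_{d-1}(\mathrm{lk}(v,K))$ together with $\mathrm{lk}(v,K)=\bigcap_i \mathrm{lk}(v,K_i)$ and $\dim\mathrm{lk}(v,K)<d+r-1$. The \textbf{main obstacle} is that a standard Mayer--Vietoris comparison shows each $\mathrm{lk}(v,K_i)$ remains $d$-Leray rather than dropping to $(d-1)$-Leray, so the inductive hypothesis does not apply to the link directly. My plan for getting around this is a joint induction on $(d,n)$ combining the link identity with the deletion identity $f_d(K)=f_d(K[V\setminus v])+f_{d-1}(\mathrm{lk}(v,K))$ --- where $f_d(K[V\setminus v])\leq g_d(n-1,t,r)$ by induction on $n$, since $K[V\setminus v]=\bigcap_i K_i[V\setminus v]$ is again an intersection of $d$-Leray complexes with $\dim\leq\dim K$ --- and to strengthen the inductive statement so that $f_{d-1}(\mathrm{lk}(v,K))$ can be controlled using the explicit Leray structure of each $\mathrm{lk}(v,K_i)$. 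A viable alternative route is to apply algebraic shifting individually to each $K_i$, which preserves $d$-Lerayness and individual face numbers, and then count $d$-faces in the intersection of the shifted complexes using the explicit combinatorial structure of shifted $d$-Leray complexes, matching $g_d(n,t,r)$ by convexity of $\binom{\cdot}{d+1}$. The hard part in either approach is that taking links or intersections does not lower the Leray parameter $d$, so the inductive step requires more than a naive application of the theorem to a smaller instance.
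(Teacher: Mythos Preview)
Your construction for the sharpness is fine and matches the paper's. The gap is in the upper bound.

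Your induction on $d$ is not going to close. You correctly identify the obstacle---links in a $d$-Leray complex stay $d$-Leray---but neither workaround you sketch actually gets around it. The deletion identity $f_d(K)=f_d(K[V\setminus v])+f_{d-1}(\mathrm{lk}(v,K))$ requires a bound on $f_{d-1}(\mathrm{lk}(v,K))$, and the link is an intersection of $t$ complexes that are still only $d$-Leray, so you are not in a smaller instance of the same theorem; ``strengthening the inductive statement'' is a hope, not a plan, and it is not clear what invariant would propagate. The algebraic-shifting route fails for a concrete reason: shifting each $K_i$ separately preserves $f_d(K_i)$ and the $d$-Lerayness of $K_i$, but it does \emph{not} preserve $f_d\bigl(\bigcap_i K_i\bigr)$, nor even bound it, since the intersection of the shifted complexes bears no controlled relation to $K$. (Also, your base case $d=0$ is outside the theorem's hypotheses; you would need $d=1$ as the base.)

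The paper's argument is entirely different and avoids induction on $d$. It uses the known $d$-Leray bound \eqref{leray_frachel} repeatedly, but on nested subsets rather than on links: pick a maximal face $W_1$ of $K_1$ (say $\dim W_1=d+r_1-1$), so $K_1$ has at least $\binom{n-r_1}{d+1}$ non-$d$-faces; then restrict $K_2$ to $W_1$, pick a maximal face $W_2\subseteq W_1$ of $K_2[W_1]$, and so on down to $W_t\in K$. The key point is that the non-$d$-faces of $K_{j+1}[W_j]$ are $(d+1)$-subsets of $W_j$, hence \emph{faces} of each $K_i$ for $i\leq j$, so these collections of non-faces are pairwise disjoint. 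Summing and using convexity of $\binom{\cdot}{d+1}$ together with $\dim W_t\leq\dim K<d+r$ gives exactly $\bar f_d(K)\geq\sum_i\binom{n_i}{d+1}$ with $\sum n_i\geq n-r+(t-1)d$. The whole proof is a half-page and never leaves dimension $d$.
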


\begin{rmk}\label{rmk:Hellynumber}
Theorem~\ref{thm:frachel_genbox} in fact follows from the above theorem.
The nerve of Cartesian products $\prod_{i\in [t]}A_i$ in $\prod_{i\in [t]}\mathbb{R}^d$ is the intersection of the nerves of their $i$-th coordinate projections. Each nerve of the projection is a nerve of convex sets in $\mathbb{R}^d$, hence a $d$-Leray complex. 
If we rewrite the above exact result into an asymptotic form,  $\alpha \binom{n}{d+1}$ being equal to $g_d(n,t,r)+1$ implies that 
$d+r+1 \geq (1- (t^d (1-\alpha))^{1/(d+1)}) n$.
Therefore, Theorem~\ref{thm:frachel_genbox} immediately follows from Theorem~\ref{thm:leray_intersection}.
\end{rmk}

\subsection{Rich family of extremal configurations}
A classical construction showing the sharpness of the fractional Helly theorem in~\eqref{leray_frachel} is as follows: considering $r$ copies of $\mathbb{R}^d$ and $n-r$ hyperplanes in general position. For the sharpness of Theorem~\ref{thm:leray_intersection}, one can consider the complex $K_d(n,t,r)$ (see Remark~\ref{rmk:tightness}).
 
 Unless $r$ is very close to $n$, both of those constructions contain a large $\Omega(n)$-size subfamily with no intersecting $(d+1)$-tuples.
It is thus natural to ask if we can achieve a better bound when stepping away from such families, that is, when we consider families that contain no large subfamilies without intersecting $(d+1)$-tuples. Note that this is equivalent to imposing a \emph{$(p,d+1)$-condition}, i.e. every $p$-tuple contains an intersecting $(d+1)$-tuple. The $(p,d+1)$-condition when $p$ is a constant is well-studied; it is related to the $(p,q)$-theorem, see concluding remark for more on this.

It is very tempting to conjecture that the quantitative bounds in the fractional Helly theorem or in Theorems~\ref{thm:frachel_genbox} and~\ref{thm:leray_intersection} can be further improved for $n$-element families with $(o(n),d+1)$-condition, i.e.~every linear-size subfamily contains an intersecting $(d+1)$-tuple. Indeed, such a phenomenon has occurred in Ramsey-Tur\'an theory in which the maximum edge-density of a graph without a fixed size clique can be significantly lowered when imposing an additional sublinear independence number condition, see e.g.~\cite{LRSS21} and references therein.

Our next result, somewhat to our own surprise, shows that this speculation is not true in a strong sense. Let alone an $(o(n),d+1)$-condition, even a $(p,q)$-condition with constant $p$ is not sufficient. For given $\alpha \in (0,1]$, we construct families $\mathcal{F}$ in $\mathbb{R}^d$ with $\alpha \binom{|\mathcal{F}|}{d+1}$ intersecting $(d+1)$-tuples with $(C_{\alpha,d},d+1)$-condition which have the same bound as in~\eqref{leray_frachel}. Here $C_{\alpha,d}$ is some constant depending on $\alpha$ and $d$.
Our construction shows that the optimal bound in fractional Helly theorem is stable in the sense that there is a rich family of different extremal configurations.

\begin{theorem}\label{thm:d-repr_example}
Let $d,r,n$ be positive integers with $n\geq d+r$.
There exists a family $\mathcal{F}$ of $n$ convex sets in $\mathbb{R}^d$ satisfying the following, where $K$
is the nerve of $\mathcal{F}$.
\begin{itemize}
\item Any subfamily of more than $d+\frac{n-d}{r+1}$ sets in $\mathcal{F}$ contains an intersecting $(d+1)$-tuple,
    \item the maximal size of an intersecting subfamily of $\mathcal{F}$ is at most $d+r$, i.e. $\dim K< d+r$,
    \item $\mathcal{F}$ contains $\binom{n}{d+1} - \binom{n-r}{d+1}$ intersecting $(d+1)$-tuples, i.e.
    the equality in  \eqref{leray_frachel} holds.
\end{itemize}
\end{theorem}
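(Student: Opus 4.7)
The target is a family $\mathcal{F}$ of $n$ convex sets in $\mathbb{R}^d$ that is extremal for~\eqref{leray_frachel} (so $f_d(K)=\binom{n}{d+1}-\binom{n-r}{d+1}$ and $\dim K<d+r$) and in addition satisfies the $(p,d+1)$-condition in the first bullet. The classical extremal family ($r$ copies of $\mathbb{R}^d$ plus $n-r$ generic hyperplanes) satisfies the $f_d$ count and the dimension bound but badly violates the $(p,d+1)$-condition, since its $n-r$ generic hyperplanes form a linear-size subfamily with no intersecting $(d+1)$-tuple. The plan is to preserve the extremal count and dimension bound while redistributing the $\binom{n-r}{d+1}$ non-intersecting $(d+1)$-tuples over a collection of overlapping $(d+r)$-subsets, so that no independent subfamily of the nerve exceeds $d+(n-d)/(r+1)$.

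For $d=1$ the construction specialises cleanly: take the $n$ intervals $I_i=[i,i+r]$ on the real line, whose pairwise intersection graph is the $r$-th power $P_n^r$ of a path. A direct count shows that its clique complex has $\binom{n}{2}-\binom{n-r}{2}$ edges, maximum clique of size $r+1=d+r$, and maximum independent set of size $\lceil n/(r+1)\rceil$, which is bounded by $1+(n-1)/(r+1)=d+(n-d)/(r+1)$. This verifies all three bullets in the $d=1$ case. For $d\ge 2$, I would describe the target nerve combinatorially by specifying its $(d+1)$-uniform non-face hypergraph $\bar H$: namely, take $\bar H$ to be the union of the $\binom{d+r}{d+1}$ internal $(d+1)$-subsets of a carefully chosen collection of $(d+r)$-subsets of $[n]$, with the collection arranged in a Steiner-type fashion so that any two chosen $(d+r)$-subsets overlap in at most $d$ vertices (hence no two contribute a common non-face). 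Such a balanced distribution can be tuned so that $|\bar H|$ lands on $\binom{n-r}{d+1}$ exactly, no $(d+r+1)$-subset of $[n]$ has all its $(d+1)$-subsets in $\bar H$ (giving $\dim K<d+r$), and every maximum clique of $\bar H$ is essentially one of the chosen $(d+r)$-subsets, which is at most $d+(n-d)/(r+1)$ once the number of chosen cliques is scaled with $n$.

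The final step is the realisation: show that this abstract complex is the nerve of a family of convex sets in $\mathbb{R}^d$. For this I would place, corresponding to each chosen $(d+r)$-subset, $d+r$ hyperplanes (or half-spaces) passing through a common point, and arrange the whole collection on a moment curve (or otherwise in sufficiently generic position) so that the only $(d+1)$-subsets with nonempty common intersection are the prescribed ones. The principal obstacle is this realisation step: even when a combinatorial complex fits the three bullets, it is nontrivial to confirm $d$-representability, and one must argue that the generic-position perturbation of the chosen $(d+r)$-clusters introduces no spurious intersecting $(d+1)$-tuples beyond the prescribed ones---otherwise the $f_d$ count overshoots the extremal bound. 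This is the delicate technical step and is most naturally handled by a Radon-partition or moment-curve argument verifying that incidences outside the designed $(d+r)$-clusters are avoided.
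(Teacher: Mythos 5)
Your $d=1$ construction is fine and coincides with the paper's (the intervals $[i,i+r]$ realise exactly the graph with edges $\{a_1<a_2\}$, $a_2-a_1\le r$), but the plan for $d\ge 2$ has a structural flaw that is not a matter of missing details. You place all $\binom{n-r}{d+1}$ non-intersecting $(d+1)$-tuples inside a collection of $(d+r)$-subsets, each of which contributes \emph{all} of its $(d+1)$-subsets as non-faces. Then each chosen $(d+r)$-subset is a subfamily of size $d+r$ with no intersecting $(d+1)$-tuple, so the first bullet forces $d+r\le d+\frac{n-d}{r+1}$, i.e.\ $r(r+1)\le n-d$. For the regime the theorem is actually needed in ($r=\Theta(n)$, as in the discussion after the statement where $r=\beta_d(\alpha)n+o(n)$), this is violated outright. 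The dimension condition is also handled incorrectly: $\dim K<d+r$ requires that \emph{every} $(d+r+1)$-subset contain at least one member of $\bar H$ (by Helly, a set all of whose $(d+1)$-subsets are faces is a face), not that no $(d+r+1)$-subset has all its $(d+1)$-subsets in $\bar H$. Since in your design every non-face lives inside a single cluster and clusters pairwise overlap in at most $d$ vertices, any $(d+r+1)$-set meeting each cluster in at most $d$ points has no non-face at all and is therefore a face, so $\dim K<d+r$ fails unless the clusters cover $[n]$ so densely that the whole scheme collapses back to something like the classical partition example — which is exactly the example you set out to avoid. Finally, the claim that a maximum ``clique of $\bar H$'' has size at most $d+(n-d)/(r+1)$ ``once the number of chosen cliques is scaled with $n$'' does not parse: the chosen subsets have size $d+r$ regardless of how many there are. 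The realisation step, which you correctly identify as delicate, is also left entirely unproved.

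For comparison, the paper does not cluster the non-faces at all. It takes the direct higher-dimensional analogue of your $d=1$ graph: the $(d+1)$-uniform hypergraph $\mathcal H$ on $[n]$ whose edges are the tuples $a_1<\cdots<a_{d+1}$ with $a_{d+1}-a_d\le r$ (so a tuple is a non-face exactly when its top two indices are more than $r$ apart). Elementary counting gives exactly $\binom{n}{d+1}-\binom{n-r}{d+1}$ edges, maximum clique $d+r$, and maximum independent set $d+\lfloor\frac{n-d}{r+1}\rfloor$, which is what makes all three bullets compatible simultaneously even for linear $r$. The geometric realisation is Lemma~\ref{lemma:hyperplanes}: an inductive construction in which $A_{k+1}$ is a thin slab between two parallel hyperplanes placed at a very large last coordinate, chosen between the levels $t'$ and $t$ of the existing $d$-wise intersections so that it meets $A_\sigma$ ($\sigma\in\binom{[k]}{d}$) precisely when $\max\sigma\ge k+1-r$, followed by a small perturbation to restore general position. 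If you want to salvage your approach, you should abandon the Steiner-type clustering and instead prove a realisation lemma of this kind for the ``top-two-close'' hypergraph.
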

Note that for given $\alpha \in (0,1]$,  the equation $\alpha\binom{n}{d+1}= \binom{n}{d+1} - \binom{n-r}{d+1}$ implies 
$$r = \beta_d(\alpha) n + o(n) = (1- (1-\alpha)^{1/(d+1)})n+o(n),$$
and the bound $p=d+\frac{n-d}{r+1}+1$ on $(p,q)$-condition is at most a constant $C_{\alpha,d}$ depending on $\alpha$ and $d$.
Thus $(C_{\alpha,d},q)$-condition does not improve the bound $\beta_d(\alpha)= 1- (1-\alpha)^{1/(d+1)}$ any further.

We also give a construction for the sharpness of Theorem~\ref{thm:frachel_genbox}, showing that the additional $(C_{\alpha,t,d},d+1)$-condition has negligible effect on the quantitative bound of fractional Helly theorem for Cartesian product of convex sets.

\begin{theorem}\label{thm:frachel_genbox2}
Let $d,t,r,n$ be positive integers with $n\geq d+r$ and $r>(t-1)d$.
There exists a family $\mathcal{F}\subseteq \mathcal{F}_{t,d}$ of $n$ convex sets satisfying the following.
\begin{itemize}
\item any subfamily of at least $d + \frac{n- t(d-1)}{r-(t-1)d}$ sets in $\mathcal{F}$ contains an intersecting $(d+1)$-tuple,
    \item the maximal size of an intersecting subfamily of $\mathcal{F}$ is at most $d+r$,
    \item the number of intersecting $(d+1)$-tuples is exactly $g_d(n,t,r)$.
\end{itemize}
\end{theorem}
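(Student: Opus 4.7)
The plan is to produce an explicit family $\mathcal{F} \subseteq \mathcal{F}_{t,d}$ whose nerve matches the extremal complex $K_d(n,t,r)$ from Theorem~\ref{thm:leray_intersection} and which in addition admits only small independent subfamilies.

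I would first set up the basic axis-aligned Cartesian product construction. Partition $[n]$ into $V_0 \sqcup V_1 \sqcup \cdots \sqcup V_t$ with $|V_0|=r-(t-1)d$ and $|V_i|=n_i$ balanced so as to minimise $\sum_i\binom{n_i}{d+1}$. Assign $\mathbb{R}^{td}$ to each vertex of $V_0$, and to each $v \in V_i$ the Cartesian product whose $i$-th coordinate is one of $n_i$ hyperplanes in general position in $\mathbb{R}^d$ and whose $j$-th coordinate ($j \neq i$) is all of $\mathbb{R}^d$. Because any $d$ generic hyperplanes in $\mathbb{R}^d$ meet but $d+1$ do not, the nerve of this family is exactly $K_d(n,t,r)$: a $(d+1)$-subset is intersecting iff it is not contained in any single $V_i$. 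This yields $g_d(n,t,r)$ intersecting $(d+1)$-tuples (third bullet) and a maximum intersecting subfamily of size $(r-(t-1)d)+td=r+d$ (second bullet).

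For the first bullet, I would analyse which subfamilies $\mathcal{S}$ admit no intersecting $(d+1)$-tuple. Once $|\mathcal{S}|\geq d+1$, $\mathcal{S}$ cannot contain any $v \in V_0$ (such a $v$ paired with any $d$ other members of $\mathcal{S}$ is intersecting), and cannot mix two parts $V_i, V_j$ with $i\neq j$ either; hence $\mathcal{S}\subseteq V_i$ for some $i$ and $|\mathcal{S}|\leq\max_i n_i$. When $r-(t-1)d\leq t$, a direct calculation shows the balanced $n_i$ already lie below the target $d+\tfrac{n-t(d-1)}{r-(t-1)d}$ and the bullet is verified. Otherwise one has to refine the construction within each $V_i$: replace the $n_i$ generic hyperplanes in the $i$-th coordinate by a structured family on $n_i$ convex sets in $\mathbb{R}^d$, essentially given by the $t=1$ case from Theorem~\ref{thm:d-repr_example} suitably rescaled, so that any independent subfamily restricted to $V_i$ has size bounded by $d+\tfrac{n-t(d-1)}{r-(t-1)d}$.

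The main obstacle is arithmetic: such a refinement in general introduces new intersecting $(d+1)$-tuples inside $V_i$ and so overshoots the target count $g_d(n,t,r)$. The key idea making the cancellation work is to simultaneously prune intersecting tuples that span several $V_i$'s by constraining the $j$-th coordinate ($j\neq i$) of each $V_i$-set to a proper convex subset of $\mathbb{R}^d$, symmetrically across $i \in [t]$. The counts should then balance by virtue of the choice $|V_0|=r-(t-1)d$ and a $t$-fold double-counting argument, but verifying this cancellation exactly, and matching the resulting independent-set bound to $d+\tfrac{n-t(d-1)}{r-(t-1)d}$, is where the technical heart of the construction lies.
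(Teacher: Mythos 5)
Your construction of the nerve $K_d(n,t,r)$ (the $V_0$-plus-generic-hyperplanes family) does give the second and third bullets, and your analysis of independent subfamilies in it is correct; but for the first bullet you yourself flag that the argument is incomplete, and this is a genuine gap, not a routine verification. In the regime where $r-(t-1)d$ is large compared with $t$, your plan is to keep $V_0$, refine each block $V_i$ internally, and then ``prune'' intersecting $(d+1)$-tuples that span several blocks by shrinking the $j$-th coordinates ($j\neq i$) of the $V_i$-sets so that the counts cancel back to $g_d(n,t,r)$. You give no mechanism for this cancellation, and it is problematic: shrinking the other coordinates creates non-intersecting cross-block tuples whose number would have to match \emph{exactly} the surplus of intersecting tuples created inside the blocks, while simultaneously not increasing the maximal intersecting subfamily beyond $d+r$ and not spoiling the independent-set bound. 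Nothing in your sketch controls these three quantities at once, so the ``technical heart'' you defer is precisely the missing proof.

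The paper avoids this entirely by a different allocation of the slack: there is no $V_0$ at all. One writes $n=n_1+\cdots+n_t$ and $r-(t-1)d=r_1+\cdots+r_t$ with both partitions balanced, applies Theorem~\ref{thm:d-repr_example} in $\mathbb{R}^d$ with parameters $(n_i,r_i)$ to get a family $\mathcal{F}'_i$ in which \emph{every} $d$-tuple is intersecting, exactly $\binom{n_i-r_i}{d+1}$ of the $(d+1)$-tuples are non-intersecting, intersecting subfamilies have size at most $d+r_i$, and independent subfamilies have size at most $d+\frac{n_i-d}{r_i+1}$; then one lifts $\mathcal{F}'_i$ into the $i$-th coordinate (all other coordinates equal to $\mathbb{R}^d$) and takes the union over $i$. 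Because all $d$-tuples inside each $\mathcal{F}'_i$ intersect, every $(d+1)$-tuple meeting at least two blocks is automatically intersecting, so the non-intersecting tuples are exactly $\sum_i\binom{n_i-r_i}{d+1}$ with $\sum_i(n_i-r_i)=n-r+(t-1)d$ balanced, i.e.\ the intersecting count is exactly $g_d(n,t,r)$ with no cancellation argument needed; the maximal intersecting family has size at most $\sum_i(d+r_i)=d+r$; and the independent-set bound $d+\frac{n_i-d}{r_i+1}<d+\frac{n-t(d-1)}{r-(t-1)d}$ follows from $n_i\leq\frac{n+t-1}{t}$ and $r_i\geq\frac{r-(t-1)d-(t-1)}{t}$. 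In short, the idea you are missing is to push the parameter $r-(t-1)d$ \emph{into} the blocks via Theorem~\ref{thm:d-repr_example} rather than keeping it as a separate $V_0$ and trying to repair the count afterwards; with that change the three bullets of Theorem~\ref{thm:frachel_genbox2} all follow directly.
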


Again, for given $\alpha\in (1- \frac{1}{t^d},1]$, if $\mathcal{F}$ contains $\alpha\binom{n}{d+1} = g_d(n,t,r)$ intersecting $(d+1)$-tuples, then we have
$$r = (1- (t^d(1-\alpha))^{1/(d+1)})n+o(n)$$
and the bound $d + \frac{n-t(d-1)}{r-(t-1)d}+1$ is at most a constant $C_{\alpha,t,d}$ depending only on $\alpha, d, t$. Hence, $(C_{\alpha,t,d},q)$-condition does not improve the bound $1- (t^d(1-\alpha))^{1/(d+1)}$ any further. 

\medskip

\noindent\textbf{Organization.} The rest of the paper will be organized as follows.
In Section~\ref{sec:Leray_intersect}, we prove Theorem~\ref{thm:leray_intersection}, and provide an example that shows the tightness of Theorems~\ref{thm:frachel_genbox} and ~\ref{thm:leray_intersection}.
In Section~\ref{sec:construction}, we give constructions to prove Theorems~\ref{thm:d-repr_example} and~\ref{thm:frachel_genbox2}.
We finish with a few concluding remarks which include some open problems.

\section{Intersection of $d$-Leray complexes}\label{sec:Leray_intersect}
The basic idea to prove Theorem~\ref{thm:leray_intersection} is applying the fractional Helly theorem for $d$-Leray complexes repeatedly. 
The following proposition is a useful tool for our optimization. It is an easy consequence of Karamata's inequality (see, e.g., \cite{Kar32}). 
\begin{proposition}\label{prop:jensen}
Let $t$ and $k$ be positive integers. Let $x_1,\dots, x_t$ be nonnegative integers with $x=\sum_{i\in [t]} x_i$. 
If $x \geq tq + s$ for some nonnegative intger $q$ and $s$ with $0 \leq s < t$, then \[\sum_{i\in[t]} \binom{x_i}{k} \geq s \binom{q+1}{k} + (t-s)\binom{q}{k}.\]
\end{proposition}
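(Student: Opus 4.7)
The statement asserts that $\sum_{i\in[t]}\binom{x_i}{k}$ is bounded below by its value at the balanced configuration with $s$ coordinates equal to $q+1$ and $t-s$ coordinates equal to $q$ (which has sum exactly $tq+s$). This is the classical discrete convexity phenomenon: the function $\binom{\cdot}{k}$ is nondecreasing and convex on $\mathbb{Z}_{\geq 0}$, so sums $\sum_i\binom{x_i}{k}$ subject to a lower bound on $\sum_i x_i$ are minimised by taking the $x_i$ as small as possible and as equal as possible. My plan is to split this into two standard reductions.

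First I would reduce to the case $\sum_i x_i=tq+s$. Since $\binom{x}{k}$ is nondecreasing in $x\in\mathbb{Z}_{\geq 0}$, decreasing any single positive coordinate by one can only decrease the sum; so if initially $\sum_i x_i>tq+s$, repeatedly decrement coordinates until the total equals $tq+s$, and a lower bound in this case implies the original one. Next, I would apply a smoothing step to reach the balanced configuration. Suppose some pair of coordinates satisfies $x_i\geq x_j+2$; replace $(x_i,x_j)$ by $(x_i-1,x_j+1)$. The total is preserved, and Pascal's identity gives
\[
\Bigl(\binom{x_i-1}{k}+\binom{x_j+1}{k}\Bigr)-\Bigl(\binom{x_i}{k}+\binom{x_j}{k}\Bigr)=-\binom{x_i-1}{k-1}+\binom{x_j}{k-1}\leq 0,
\]
because $x_i-1\geq x_j+1>x_j$ and $\binom{\cdot}{k-1}$ is nondecreasing on $\mathbb{Z}_{\geq 0}$. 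Each such smoothing strictly decreases $\sum_\ell x_\ell^2$, so the process terminates at a configuration with $\max_\ell x_\ell-\min_\ell x_\ell\leq 1$, which is, up to reordering, precisely the balanced one. Combining the two reductions yields the inequality.

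There is really no substantive obstacle beyond being careful with the monotonicity step in the case $x>tq+s$. An alternative would be to invoke Karamata's inequality directly, after verifying that the decreasing rearrangement of $(x_1,\ldots,x_t)$ majorises the balanced vector $(q+1,\ldots,q+1,q,\ldots,q)$, but the smoothing argument above avoids any extension of $\binom{\cdot}{k}$ off the integers and seems cleanest for such a short statement.
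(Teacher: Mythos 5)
Your argument is correct, and it takes a more elementary route than the paper, which does not write out a proof at all: it simply asserts that the proposition ``is an easy consequence of Karamata's inequality.'' Your two reductions are both sound: the monotone decrement step is justified because $\binom{x}{k}$ is nondecreasing on $\mathbb{Z}_{\geq 0}$ (and some coordinate is positive as long as the total exceeds $tq+s\geq 0$), and the smoothing step is exactly right --- Pascal's identity gives the displayed difference $\binom{x_j}{k-1}-\binom{x_i-1}{k-1}\leq 0$, the quantity $\sum_\ell x_\ell^2$ drops by at least $2$ at each step so the process terminates, and a configuration with total $tq+s$ and $\max-\min\leq 1$ is forced by uniqueness of division with remainder to have exactly $s$ coordinates equal to $q+1$ and $t-s$ equal to $q$. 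Compared with the paper's one-line appeal to Karamata, your proof is self-contained and sidesteps two minor technicalities that the majorization route has to address: extending $\binom{\cdot}{k}$ to a convex function on the reals (the polynomial $x(x-1)\cdots(x-k+1)/k!$ is not convex everywhere, so one must use a piecewise or truncated extension), and the fact that $\sum_i x_i$ may strictly exceed $tq+s$, so one needs either your preliminary decrement step or a submajorization version of Karamata for nondecreasing convex functions. What the citation buys the authors is brevity; what your argument buys is a short, fully explicit verification using nothing beyond Pascal's identity and monotonicity.
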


Before we prove Theorem~\ref{thm:leray_intersection}, we remark that, when $d = 1$ and $r = t-1$, we have an obvious upper bound from a famous result in extremal graph theory, the so-called Tur\'{a}n's theorem.

\begin{theorem}[Tur\'{a}n's theorem, \cite{Tur41}]\label{thm:turan}
Let $q$, $s$, $t$ and $n$ be positive integers such that $n = tq + s$ and $0 \leq s < t$.
For every graph on $n$ vertices with no clique of size $t+1$, the number of edges is at most
\[ g_1
(n,t,t-1)=\binom{n}{2} - s\binom{q+1}{2} - (t-s)\binom{q}{2} .\]
\end{theorem}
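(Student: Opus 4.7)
The plan is to prove Tur\'an's theorem via the classical Zykov symmetrization, which reduces the problem to minimising a sum of binomial coefficients that is directly controlled by Proposition~\ref{prop:jensen}. I would fix a $K_{t+1}$-free graph $G$ on $n$ vertices with the maximum number of edges, and show that such a $G$ may be taken to be a complete multipartite graph with at most $t$ parts.

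The first step is to replace $G$, without losing edges or creating a $K_{t+1}$, by a graph in which the non-adjacency relation is transitive. To this end, I define the relation $u\sim v$ on $V(G)$ by $u=v$, or $uv\notin E(G)$ and $N(u)=N(v)$. Suppose $\sim$ is not an equivalence relation on an extremal $G$: then there exist vertices $u,v,w$ with $uv,vw\notin E(G)$ but either $uw\in E(G)$ or $N(u)\ne N(w)$. Assuming without loss of generality $\deg(u)\geq \deg(w)$, I delete $w$ and insert a new vertex with neighbourhood $N(u)$. The new vertex is non-adjacent to $u$, so any $(t+1)$-clique containing it would, after swapping the new vertex for $u$, yield a $(t+1)$-clique of $G$ — impossible. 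The edge count does not decrease, and iterating (with an appropriate monovariant such as $\sum_v\deg(v)^2$, which strictly increases at each nontrivial step) terminates at an extremal graph in which $\sim$ is an equivalence relation, i.e.\ a complete multipartite graph.

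Once $G$ is complete multipartite with part sizes $x_1,\dots,x_k$, the $K_{t+1}$-free condition forces $k\leq t$; padding with empty parts and reindexing, we may assume $k=t$ and $\sum_{i=1}^t x_i=n$. Then
\[e(G)=\binom{n}{2}-\sum_{i=1}^t\binom{x_i}{2},\]
so maximising the number of edges is equivalent to minimising $\sum_i\binom{x_i}{2}$. Applying Proposition~\ref{prop:jensen} with exponent $2$ and $n=tq+s$ gives
\[\sum_{i=1}^t\binom{x_i}{2}\geq s\binom{q+1}{2}+(t-s)\binom{q}{2},\]
so $e(G)\leq\binom{n}{2}-s\binom{q+1}{2}-(t-s)\binom{q}{2}=g_1(n,t,t-1)$, as required.

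The main obstacle is setting up the symmetrization carefully: one must verify that cloning never produces a $K_{t+1}$ (which follows from the non-adjacency between a clone and its model) and that the iteration terminates, both of which are routine but require picking the right monovariant. The concluding optimisation step, in contrast, is essentially a convexity/Karamata argument already packaged for us by Proposition~\ref{prop:jensen}, so once the symmetrization is carried out the bound falls out immediately.
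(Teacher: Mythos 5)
The paper does not actually prove this statement---Tur\'an's theorem is quoted as a classical black box from the cited reference---so your argument is necessarily independent of the text; the route you choose (Zykov symmetrization plus the convexity step via Proposition~\ref{prop:jensen}) is a standard and viable one, and your concluding optimisation (complete multipartite with at most $t$ parts, $e(G)=\binom{n}{2}-\sum_i\binom{x_i}{2}$, then Proposition~\ref{prop:jensen} with $\sum_i x_i=n=tq+s$) is correct. The problem is in the symmetrization step. A small point first: the relation $\sim$ you define (equality, or non-adjacency together with $N(u)=N(v)$) is \emph{always} an equivalence relation on any graph, so ``suppose $\sim$ is not an equivalence relation'' is vacuous as written; what must be negated is that non-adjacency is transitive, i.e.\ the essential bad configuration is $uv,vw\notin E(G)$ with $uw\in E(G)$. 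The serious flaw is the replacement move in exactly that configuration. You delete $w$ and insert a clone of $u$ whenever $\deg(u)\geq\deg(w)$, claiming the edge count does not decrease; but when $uw\in E(G)$ the clone is adjacent only to $N(u)\setminus\{w\}$, so the edge count changes by $\deg(u)-1-\deg(w)$, which is $-1$ when $\deg(u)=\deg(w)$. Thus the move can strictly lose edges in the central case, and your argument never uses the vertex $v$ at all, which is a sign it cannot force transitivity of non-adjacency. The termination claim is also unsubstantiated: under your move the quantity $\sum_v\deg(v)^2$ changes by $\deg(u)^2-\deg(w)^2+\sum_{x\in N(u)\setminus N(w)}\bigl(2\deg(x)+1\bigr)-\sum_{y\in N(w)\setminus N(u)}\bigl(2\deg(y)-1\bigr)$ in the case $uw\notin E$, which need not be positive.

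The standard repair brings $\deg(v)$ into play, as in Zykov's original argument. Take $G$ edge-maximal and $K_{t+1}$-free and suppose $uv,vw\notin E$, $uw\in E$. If $\deg(v)<\max\{\deg(u),\deg(w)\}$, replace $v$ by a clone of the higher-degree of $u,w$; this keeps the graph $K_{t+1}$-free (your swapping argument) and strictly increases the number of edges. Otherwise $\deg(v)\geq\deg(u)$ and $\deg(v)\geq\deg(w)$, and replacing both $u$ and $w$ by clones of $v$ changes the edge count by $2\deg(v)-\deg(u)-\deg(w)+1>0$, the $+1$ coming from the deleted edge $uw$. Either way extremality of $G$ is contradicted directly, so no iteration or monovariant is needed: an extremal graph is complete multipartite with at most $t$ parts, and your final application of Proposition~\ref{prop:jensen} then yields the stated bound.
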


Now we prove the optimal fractional Helly theorem for the intersection of $d$-Leray complexes.
\begingroup
\def\thetheorem{\ref{thm:leray_intersection}}
\begin{theorem}
Let $d$, $r$, $t$, and $n$ be positive integers such that $n > d+r$ and $r \geq (t-1)d$.
Let $V$ be a set of $n$ vertices, and let $K_1,\ldots,K_t$ be $d$-Leray complexes on $V$. If the intersection $K = \bigcap_{i=1}^{t}K_i$ has dimension $\dim K < d+r$, then 
\[f_d(K) \leq g_d(n,t,r).\]
\end{theorem}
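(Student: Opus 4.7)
The plan is to prove Theorem~\ref{thm:leray_intersection} by induction on $t$, with~\eqref{leray_frachel} serving as both the base case and a recurring tool, and Proposition~\ref{prop:jensen} performing the final optimization. The base case $t=1$ is exactly~\eqref{leray_frachel}: $K_1$ is $d$-Leray with $\dim K_1 < d+r$, so $f_d(K_1) \leq \binom{n}{d+1} - \binom{n-r}{d+1} = g_d(n,1,r)$.

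The main goal of the inductive step is to construct a partition $V = V_0 \cup V_1 \cup \cdots \cup V_t$ with $|V_0| \leq r - (t-1)d$ such that the induced subcomplex $K_i[V_i]$ contains no $d$-face for every $i \in [t]$. Granted such a partition, every $(d+1)$-subset of $V_i$ is a non-face of $K_i$ and therefore of $K$; pairwise disjointness of the $V_i$'s makes these non-faces distinct across $i$, so
\[
\binom{n}{d+1} - f_d(K) \,\geq\, \sum_{i=1}^{t} \binom{|V_i|}{d+1}.
\]
Since $\sum_{i=1}^t |V_i| \geq n - r + (t-1)d$, Proposition~\ref{prop:jensen} with $k=d+1$ upgrades the right-hand side to $\binom{n}{d+1} - g_d(n,t,r)$, yielding the target bound $f_d(K) \leq g_d(n,t,r)$.

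I would build the partition recursively by peeling off $V_t$ first: select $V_t \subseteq V$ so that $K_t[V_t]$ contains no $d$-face while $\dim \bigcap_{i<t}K_i[V \setminus V_t] < d + (r-d)$, and then apply the inductive hypothesis to $K_1[V \setminus V_t], \ldots, K_{t-1}[V \setminus V_t]$ with parameter $r' = r - d$ (valid since $r \geq (t-1)d$ gives $r' \geq (t-2)d$) to produce $V_0, V_1, \ldots, V_{t-1}$ with $|V_0| \leq r' - (t-2)d = r - (t-1)d$. The main obstacle is establishing the existence of such a $V_t$: it must simultaneously make $K_t[V_t]$ ``$d$-face-free'' and force a dimension drop of $d$ on the intersection of the remaining $t-1$ complexes over $V \setminus V_t$. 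A naive choice of $V_t$ (for instance, a maximal face of $K_t$) does not suffice, so the $d$-Leray property of $K_t$ must be exploited more carefully, likely through~\eqref{leray_frachel} applied to several induced subcomplexes of $K_t$, or through an argument that any hypothetical $r$-face of $\bigcap_{i<t}K_i[V \setminus V_t]$ would extend, via a $d$-face of $K_t$ housed in $V_t$, to a face of $K$ of size $d+r+1$, contradicting $\dim K < d+r$.
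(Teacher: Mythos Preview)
Your counting framework---disjoint non-face collections followed by Proposition~\ref{prop:jensen}---is the same as the paper's, but the structural object you are trying to build does not exist in general, so the proof cannot be completed along this route.

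Here is a small counterexample to the existence of your partition. Take $d=1$, $t=2$, $r=1$ (so $r-(t-1)d=0$ and you need $V_0=\varnothing$), $n=5$. Let $K_1$ be the flag complex of the path $1\text{--}2\text{--}3\text{--}4\text{--}5$ and $K_2$ the flag complex of the path $1\text{--}3\text{--}5\text{--}2\text{--}4$; both are $1$-Leray (trees are chordal). Since the two edge sets are disjoint, $K=K_1\cap K_2$ has no $1$-face, so $\dim K=0<2=d+r$. Your partition would require $V=V_1\sqcup V_2$ with $V_1$ independent in the first path and $V_2$ independent in the second. A short case check (or the observation that both paths are Hamiltonian, so each independent side has size at most $3$, and every such split forces a monochromatic edge) shows this is impossible. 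Hence the recursive step ``choose $V_t$ with $K_t[V_t]$ $d$-face--free and $\dim\bigcap_{i<t}K_i[V\setminus V_t]<r$'' can fail outright.

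The paper avoids this by \emph{not} partitioning $V$. Instead it builds a nested chain of faces $V\supseteq W_1\supseteq W_2\supseteq\cdots\supseteq W_t$, where $W_j$ is a maximum face of $K_j[W_{j-1}]$. The non-$d$-face sets $F_j$ (the $(d+1)$-subsets of $W_{j-1}$ that are not faces of $K_j$) are then pairwise disjoint, not because they live on disjoint vertex sets, but because every $(d+1)$-subset of $W_{j-1}$ is automatically a face of $K_1,\ldots,K_{j-1}$. Applying~\eqref{leray_frachel} to each $K_j[W_{j-1}]$ gives $|F_j|\ge\binom{d+r_{j-1}-r_j}{d+1}$, and since $W_t$ is a face of $K$ one gets $r_t\le r$; summing and invoking Proposition~\ref{prop:jensen} finishes. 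The idea you float at the end---extend a large face of $\bigcap_{i<t}K_i$ by a $d$-face of $K_t$---is essentially this nesting argument run in reverse; if you pursue it, you should let the chain of faces (not a vertex partition) organise the disjointness.
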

\addtocounter{theorem}{-1}
\endgroup
\begin{proof}
If $t = 1$, then the statement follows from \eqref{leray_frachel}. Thus we may assume $t > 1$.
Adding all $i$-tuples with $i\leq d$ does not change the $d$-Lerayness and affect the conclusion of the statement, we may assume that every $i$-tuple in $V$ forms a face for $i\leq d$.

From each $K_i$, we will take a set $F_i$ of $(d+1)$-tuples that are not faces so that $F_i$'s are mutually disjoint.
Then we obtain an upper bound $f_d(K) \leq \binom{n}{d+1} - \sum_{i}|F_i|$.
Let $\bar{f_d}(\cdot)$ be the number of $(d+1)$-tuples that are not faces. 

Note that we may assume $\dim K \geq d$. Then we have $\dim K_i \geq \dim K \geq d$ for each $i \in [t]$.
Suppose $\dim K_1 = r_1 + d-1$ for some $r_1 > 0$.
Then, by~\eqref{leray_frachel}, we have $\bar{f_d}(K_1) \geq \binom{n-r_1}{d+1}$.
Let $F_1$ be the set of all $(d+1)$-tuples contributing to $\bar{f_d}(K_1)$, and let $W_1$ be a $(d+r_1-1)$-dimensional face in $K_1$.
For $1 \leq j < t$, we define $F_{j+1}$ and $W_{j+1}$ inductively as follows.

Assume that $W_j$ is a $(d+r_j-1)$-dimensional face of $K_j$. Let 
$$r_{j+1} =  \dim K_{j+1}[W_j] -(d-1).$$ 
Then we have \[\bar{f_d}(K_{j+1}[W_j]) \geq \binom{|W_j|-r_{j+1}}{d+1} = \binom{d+r_j-r_{j+1}}{d+1}.\]
Indeed, this is trivial if $r_{j+1}=0$ and follows from \eqref{leray_frachel} if  $r_{j+1}>0$. 

Let $F_{j+1}$ be the set of all $(d+1)$-tuples contributing to $\bar{f_d}(K_{j+1}[W_j])$.
Note that $F_{j+1}$ is disjoint from $\bigcup_{1 \leq i \leq j} F_i$, since $W_i$ is a simplex in $K_i$ for each $i\in [j]$.
Let $W_{j+1}$ be a largest face in $K_{j+1}$ which is a $(d+r_{j+1}-1)$-dimensional face. As we assume every $d$-tuple forms a face, we have $r_{j+1}\geq 0$. Repeating this defines $F_1,\dots, F_t$ and $W_1,\dots, W_t$.

By collecting all $F_j$'s, we obtain
\begin{align}\label{toapplyjensen} \bar{f_d}(K) \geq \binom{n-r_1}{d+1} + \binom{d+r_1-r_2}{d+1} + \binom{d+r_2-r_3}{d+1} + \cdots + \binom{d+r_{t-1}-r_t}{d+1}.\end{align}
Note that by definition $W_t \in K$, thus we have 
$$d+r_t -1 =\dim K[W_t]\leq \dim K < d+r,$$ 
implying that $r_t \leq r$. Consequently, 
\[(n-r_1)+ (d+r_1-r_2) +\dots + (d+r_{t-1}-r_t) \geq n-r + (t-1)d.\] 
By  Proposition~\ref{prop:jensen}, $\bar{f_d}(K)$ is at least $\sum_{i\in [t]} \binom{n_i}{d+1}$ where $\sum_{i\in [t]} n_i = n-r+(t-1)d$ and $|n_i-n_j|\leq 1$ for all $i\neq j$. This completes the proof.
\end{proof}

\begin{rmk}\label{rmk:tightness}
To see the sharpness of Theorem~\ref{thm:leray_intersection}, take the complex $K_{d}(n,t,r)$ and
let $K_i$, $i\in[t]$, be the simplicial complex where a set $U\subset V$  is a face of $K_i$ if and only if $|U\cap V_i|\leq d$.
This complex $K_i$ can be expressed as the nerve of the family $\mathcal{F}_i$ of convex sets in $\mathbb{R}^d$ consisting of  $|V|-|V_i|$ copies of $\mathbb{R}^d$ corresponding to the vertices outside $V_i$ and $|V_i|$ hyperplanes in general position in $\mathbb{R}^d$ corresponding to the vertices in $V_i$. 
For each $v\in V$ , let $\psi_i(v)$ be the convex set in $\mathcal{F}_i$ corresponding to the vertex $v$. Let $\mathcal{F}$ be the collection of $n$ Cartesian products $\prod_{i\in [k]} \psi_i(v)$, $v\in V$. Then the nerve of $\mathcal{F}$ is $K_d(n,t,r) = \bigcap_{i\in [t]} K_i$. A maximal face contains all vertices in $V_0$ and $d$ vertices from each $V_i$ with $i>0$. The dimension of $K_d(n,t,r)$ is exactly $d+r-1$. This shows that the upper bound in Theorem~\ref{thm:leray_intersection} is tight.
\end{rmk}

\section{No large subfamily without intersecting $(d+1)$-tuples}\label{sec:construction}

\subsection{Extremal example of convex sets in $\mathbb{R}^d$}\label{subsec:d-repr}
We now prove Theorem~\ref{thm:d-repr_example} by constructing a family of convex sets in $\mathbb{R}^d$ satisfying the conditions in the theorem. For this, we need the following lemma.
\begin{lemma}\label{lemma:hyperplanes}
Let $d$, $n$, and $r$ be positive integers such that $n \geq d+r$.
Then there exist $n$ convex sets $A_1,A_2,\ldots,A_n$ in $\mathbb{R}^d$ such that for every $1 \leq i_1 < i_2 < \cdots < i_{d+1} \leq n$, the intersection $\bigcap_{j =1}^{d+1}{A_{i_j}}$ is non-empty if and only if $i_{d+1} - i_d \leq r$.
\end{lemma}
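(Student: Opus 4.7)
The plan is to explicitly construct the family $\{A_1, \ldots, A_n\}$, equivalently realizing the simplicial complex
$$K := \{S = \{s_1 < s_2 < \cdots < s_k\} \subseteq [n] : k \le d, \text{ or } k \ge d+1 \text{ and } s_k - s_d \le r\}$$
as the nerve of $n$ convex sets in $\mathbb{R}^d$. A direct check shows that the maximal faces of $K$ are precisely the sets $M_{a, J} := J \cup \{a, a+1, \ldots, \min(n, a+r)\}$ for $a \ge d$ and $J \in \binom{[a-1]}{d-1}$, modulo appropriate handling of boundary cases when $a > n - r$, each of size at most $d+r$.

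For the base case $d = 1$, the family $A_i := [i, i+r] \subset \mathbb{R}$ immediately works: for $i < j$ one has $A_i \cap A_j = [j, i+r]$, which is non-empty precisely when $j - i \le r$. For general $d \ge 2$, my plan is to use a witness-point construction: to each maximal face $M$ of $K$, assign a point $q_M \in \mathbb{R}^d$ in a carefully chosen position, and set
$$A_i := \mathrm{conv}\{q_M : M \text{ is a maximal face of } K \text{ with } i \in M\}, \qquad i \in [n].$$
The positive direction of the lemma is then immediate: whenever $i_{d+1} - i_d \le r$, the tuple $\{i_1, \ldots, i_{d+1}\}$ is contained in the maximal face $M := \{i_1, \ldots, i_{d-1}\} \cup [i_d, i_d + r]$ of $K$, so $q_M \in \bigcap_{j} A_{i_j}$.

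The main obstacle is the negative direction: when $i_{d+1} - i_d > r$, one must show $\bigcap_{j} A_{i_j} = \emptyset$. No single witness $q_M$ lies in every $A_{i_j}$ (since no maximal face of $K$ contains $\{i_1, \ldots, i_{d+1}\}$), but convex combinations of distinct witnesses could in principle yield a common point, and this is not ruled out by generic position alone, as a small test case with $n = 6$, $d = r = 2$ already shows. Resolving this obstacle requires an explicit coordinate choice of the $q_M$ that respects the combinatorial structure of $K$—for example, placing them along a moment curve and exploiting its alternating-sign property to exclude spurious intersections. An alternative approach is to build the family inductively on $d$, starting from $A_i = [i, i+r] \subset \mathbb{R}$ as the base case and, at each step, adding an auxiliary coordinate that modifies a family $A_i^{(d-1)} \subset \mathbb{R}^{d-1}$ realizing the condition $i_d - i_{d-1} \le r$ for $d$-tuples into a family $A_i^{(d)} \subset \mathbb{R}^d$ realizing $i_{d+1} - i_d \le r$ for $(d+1)$-tuples.
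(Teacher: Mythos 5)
Your reduction of the lemma to realizing the clique complex $K$ is fine, and the positive direction of your witness-point construction is indeed immediate. But the proposal does not actually prove the lemma: the entire content is the negative direction (that $\bigcap_j A_{i_j}=\varnothing$ whenever $i_{d+1}-i_d>r$), and you explicitly leave it unresolved. You correctly observe that taking $A_i=\mathrm{conv}\{q_M: i\in M\}$ with generically placed witnesses already produces spurious $(d+1)$-wise intersections (your $n=6$, $d=r=2$ test), and then you only gesture at remedies --- ``place the $q_M$ on a moment curve and exploit its alternating-sign property'', or ``build the family inductively on $d$ by adding an auxiliary coordinate'' --- without carrying either out or giving any argument that they exclude all unwanted intersections. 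As stated, there is no proof that convex hulls of moment-curve points have exactly the nerve $K$ (intersections of such polytopes are governed by separating hyperplanes, not just by sign patterns of shared vertices, and nothing in the proposal controls them), and the inductive sketch does not even specify how the $(d-1)$-dimensional family, whose defining condition involves $i_d-i_{d-1}$, is to be modified so that the new condition involves $i_{d+1}-i_d$. So this is a genuine gap, not a routine verification left to the reader.

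For comparison, the paper's construction avoids convex hulls of points altogether and is inductive in $n$ rather than in $d$: it starts with $d$ hyperplanes in general position and then adds, one at a time, slabs $A_{k+1}$ bounded by two hyperplanes nearly orthogonal to $e_d$, placed at a $d$-th coordinate height chosen so that $A_{k+1}$ meets a $d$-wise intersection $A_\sigma$ ($\sigma\in\binom{[k]}{d}$) if and only if $\max\sigma\geq k+1-r$; the invariant that $\max\{\pi_d(x):x\in A_\sigma\}$ is monotone in $\max\sigma$ is what makes the single threshold height $s_{k+1}$ exist, and a small perturbation of the normal vector restores general position without disturbing the finitely many intersection constraints. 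Something with this level of explicit geometric control is exactly what your proposal is missing for the emptiness direction.
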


Before we prove Lemma~\ref{lemma:hyperplanes}, we first present the proof of Theorem~\ref{thm:d-repr_example} based on Lemma~\ref{lemma:hyperplanes}.
\begin{proof}[Proof of Theorem~\ref{thm:d-repr_example}]
Let $\mathcal{H}$ be the hypergraph on $[n]$ such that for every $1 \leq a_1 < a_2 < \cdots < a_{d+1} \leq n$, the set $\{a_1,a_2,\ldots,a_{d+1}\}$ is an edge in $\mathcal{H}$ if and only if $a_{d+1} - a_d \leq r$.
By Lemma~\ref{lemma:hyperplanes}, there exists a family $\mathcal{F}$ of $n$ convex sets in $\mathbb{R}^d$ whose $(d+1)$-intersection hypergraph is isomorphic to $\mathcal{H}$, i.e. the collection of intersecting $(d+1)$-tuples of convex sets in $\mathcal{F}$ forms a hypergraph isomorphic to $\mathcal{H}$.
We claim that the family $\mathcal{F}$ satisfy the conditions in the statement.
\begin{claim}\label{claim:f_d}
$\displaystyle{|E(\mathcal{H})| = \binom{n}{d+1} - \binom{n-r}{d+1}}$.
\end{claim}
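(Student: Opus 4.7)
The plan is to count the non-edges of $\mathcal{H}$, i.e.~the $(d+1)$-subsets $\{a_1<a_2<\cdots<a_{d+1}\}\subseteq[n]$ with $a_{d+1}-a_d\geq r+1$, and show that they are in bijection with the $(d+1)$-subsets of $[n-r]$.

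The bijection I have in mind is the shift map
\[
\Phi\colon (a_1,\ldots,a_d,a_{d+1})\mapsto (a_1,\ldots,a_d,a_{d+1}-r).
\]
For the forward direction, I need to verify that if $(a_1,\ldots,a_{d+1})$ is a non-edge of $\mathcal{H}$, then the image is a strictly increasing sequence in $[n-r]$: the only nontrivial comparison is $a_d < a_{d+1}-r$, which follows from the non-edge condition $a_{d+1}-a_d\geq r+1$; and $a_{d+1}-r\leq n-r$ is immediate. For the inverse, given any $(d+1)$-subset $\{b_1<\cdots<b_{d+1}\}$ of $[n-r]$, define $a_i=b_i$ for $i\le d$ and $a_{d+1}=b_{d+1}+r$; then $a_{d+1}-a_d=b_{d+1}-b_d+r\geq r+1$, so this is a non-edge, and $a_{d+1}\le n$.

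Hence the number of non-edges equals $\binom{n-r}{d+1}$, and the claim follows by subtracting from the total $\binom{n}{d+1}$. I do not expect any obstacle here: the only thing to be careful about is the boundary case $n-r=d$ (which is permitted by the hypothesis $n\geq d+r$), where $\binom{n-r}{d+1}=0$ correctly reflects that every $(d+1)$-tuple is an edge, since in that case $a_d\geq d$ forces $a_{d+1}-a_d\leq n-d=r$.
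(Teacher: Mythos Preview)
Your argument is correct and is essentially identical to the paper's own proof: both count the non-edges via the bijection that subtracts $r$ from the largest element, mapping $\binom{[n]}{d+1}\setminus E(\mathcal{H})$ onto $\binom{[n-r]}{d+1}$. Your write-up is slightly more detailed (you spell out the inverse and the boundary case $n-r=d$), but the idea is the same.
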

\begin{poc}
It suffices to show that $\Big|\binom{[n]}{d+1}-E(\mathcal{H})\Big| = \binom{n-r}{d+1}.$
For each $A\in \binom{[n]}{d+1}-E(\mathcal{H})$, we know that the largest number in $A$ is bigger than the second largest number plus $r$. Let $h(A)$ be the set we obtain from $A$ by replacing the largest number $a$ in $A$ with $a-r$. 
Then it is easy to see that $h$ is a bijection from $\binom{[n]}{d+1}-E(\mathcal{H})$ to $\binom{[n-r]}{d+1}$, proving the claim.
\end{poc}

\begin{claim}\label{claim:dim}
The maximal size of a clique of $\mathcal{H}$ is $d+r$, and the maximal size of an independent set of $\mathcal{H}$ is $d + \lfloor\frac{n-d}{r+1}\rfloor$.
\end{claim}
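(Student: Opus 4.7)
The plan is to characterize cliques and independent sets of $\mathcal{H}$ via the gap pattern of their sorted elements, then optimize. For $S=\{s_1<s_2<\cdots<s_k\}\subseteq [n]$, every $(d+1)$-subset $\{s_{i_1}<\cdots<s_{i_{d+1}}\}$ has its top two elements in positions $i_d,i_{d+1}$ with $i_d\geq d$. Hence $S$ is a clique iff $s_{i_{d+1}}-s_{i_d}\leq r$ for every such choice, and $S$ is independent iff $s_{i_{d+1}}-s_{i_d}> r$ for every such choice.

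For the clique bound, I would observe that among all such pairs $(i_d,i_{d+1})$, the \emph{maximum} difference is $s_k-s_d$ (take $i_d=d$, $i_{d+1}=k$). So $S$ is a clique iff $s_k-s_d\leq r$, meaning the $k-d+1$ integers $s_d,s_{d+1},\ldots,s_k$ lie in an interval of length $r$. This forces $k-d+1\leq r+1$, i.e.\ $|S|\leq d+r$, with equality achieved by $\{1,2,\ldots,d-1\}\cup\{n-r,n-r+1,\ldots,n\}$.

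For the independent set bound, the \emph{minimum} difference over such pairs $(i_d,i_{d+1})$ is attained at consecutive indices $(j,j+1)$ with $j\geq d$. So $S$ is independent iff $s_{j+1}-s_j>r$ for every $j\in\{d,d+1,\ldots,k-1\}$. Telescoping gives $s_k\geq s_d+(k-d)(r+1)\geq d+(k-d)(r+1)$, and combined with $s_k\leq n$ this yields $k\leq d+\lfloor\frac{n-d}{r+1}\rfloor$. Equality is achieved by $\{1,2,\ldots,d\}\cup\{d+i(r+1):1\leq i\leq\lfloor\frac{n-d}{r+1}\rfloor\}$, which fits inside $[n]$ by the definition of the floor.

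The only subtlety, and the single step worth being careful about, is correctly identifying which pair among $\{s_{i_1},\ldots,s_{i_{d+1}}\}$ gives the binding constraint in each direction (maximum difference for cliques, minimum for independent sets), together with the observation that any pair $(s_j,s_{j+1})$ with $j\geq d$ can actually be realised as the top two of some $(d+1)$-subset because there are at least $d-1$ elements of $S$ strictly below $s_j$. Once this combinatorial dictionary is in place, the bounds follow by elementary interval-packing, and the constructions above witness sharpness.
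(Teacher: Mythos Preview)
Your argument is correct and essentially identical to the paper's own proof: both reduce the clique condition to the single inequality $s_k-s_d\leq r$ (the paper tests the specific subset $\{w_1,\dots,w_d,w_k\}$), and both reduce independence to the gap condition on indices $\geq d$, then telescope together with $s_d\geq d$ and $s_k\leq n$. The only cosmetic differences are that you phrase independence via consecutive gaps $s_{j+1}-s_j>r$ while the paper states the equivalent $w_j-w_i>r$ for all $d\leq i<j$, and your clique witness is $\{1,\dots,d-1\}\cup\{n-r,\dots,n\}$ rather than $\{1,\dots,d+r\}$.
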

\begin{poc}
Let $W$ be a set of vertices $w_1,w_2,\ldots,w_k$ of $\mathcal{H}$ such that $1 \leq w_1 < w_2 < \cdots < w_k \leq n$.

We first prove the maximum size of a clique of $\mathcal{H}$ is $d+r$. 
If $W$ forms a clique, then $\{w_1,\dots,w_d,w_k\}$ must be an edge in $\mathcal{H}$. Thus $w_d< w_{d+1}< \dots < w_k \leq w_d+r$. This implies $k-d\leq r$. Hence, every clique of $\mathcal{H}$ has size at most $d + r$.
On the other hand, the equality holds $\{1,2,\ldots,d+r\}$ is a clique of size $d+r$.

Next, we prove that the maximum size of an independent set of $\mathcal{H}$ is $d+s$, where $s=\lfloor\frac{n-d}{r+1}\rfloor$. 
Again, 
by the definition of $\mathcal{H}$, $W$ is independent in $\mathcal{H}$ if and only if $w_j - w_i > r$ for every $d \leq i < j \leq k$.
Therefore, if $W$ is an independent set of $\mathcal{H}$, then we have $n\geq w_k \geq (r+1)(k-d)+ w_{d}$. Since $w_{d}\geq d$, we have 
$|W|=k \leq d+ s$.
On the other hand, 
$$\{1,\dots, d-1,d, d+(r+1), d+2(r+1),\dots, d+ s(r+1)\}$$ 
is an independent set of size $d+ s$.
\end{poc}

Note that, by Helly's theorem for convex sets in $\mathbb{R}^d$, a clique of $\mathcal{H}$ of size at least $d+1$ corresponds to an intersecting subfamily of $\mathcal{F}$.
Thus, Claim~\ref{claim:f_d} and Claim~\ref{claim:dim} shows that $\mathcal{F}$ satisfies the conditions of the statement.
This completes the proof.
\end{proof}

Now we give a proof of Lemma~\ref{lemma:hyperplanes} via an explicit construction.
\begin{proof}[Proof of Lemma~\ref{lemma:hyperplanes}]
We construct $A_k$ inductively.
Let $e_d = (0,0,\ldots,0,1)\in\mathbb{R}^d$.
First, let $u_1,u_2,\ldots,u_d$ be vectors in $\mathbb{R}^d$ such that every $d$ of the vectors $u_1,u_2,\ldots,u_d,e_d$ are linearly independent in $\mathbb{R}^d$.
Let $H_1,H_2,\ldots,H_d$ be $d$ hyperplanes in $\mathbb{R}^d$ where $u_i$ is the normal vector of $H_i$ for each $i\in [d]$. Note that $H_1,\dots, H_d$ are 
in general position and every $d$ hyperplanes in general position in $\mathbb{R}^d$ meet at exactly one point.

We start by letting $A_i = H_i$ for each $i \in [d]$.
Given a point $x = (x_1,x_2,\ldots,x_d) \in \mathbb{R}^d$, denote by $\pi_d(x)$ the $d$-th coordinate of $x$, that is, $\pi_d(x) = x_d$.
Let $k \geq d$.  Suppose we have found hyperplanes $H_{d+1},\ldots,H_k$ having normal vectors $u_{d+1},\ldots,u_k$ respectively and convex sets $A_{d+1}, \ldots, A_k$ such that the following hold.
\begin{enumerate}[(i)]
\item Every $d$ of the vectors $u_1,u_2,\ldots,u_k,e_d$ are linearly independent. In particular, the hyperplanes $H_1,H_2,\ldots,H_k$ are in general position in $\mathbb{R}^d$.
\item For each $d < i \leq k$, there exists a positive real number $s_i$ such that 
		\[A_i = \bigcup_{0 \leq s \leq s_i} \left(H_i-s\cdot e_d\right),\]
	  where $H_i-s\cdot e_d = \{z-(0,0,\ldots,0,s): z \in H_i\}$.
	  That is, $A_i$ is the region bounded by two parallel hyperplanes $H_i-s_i\cdot e_d$ and $H_i$.
	  Thus, for each $\sigma\in\binom{[k]}{d}$, the intersection $A_\sigma = \bigcap_{i \in \sigma}A_i$ is compact. 
\item $\bigcap_{j =1}^{d+1}{A_{i_j}} \neq \varnothing$ if and only if $i_{d+1} - i_d \leq r$ for every $1 \leq i_1 < i_2 < \cdots < i_{d+1} \leq k$.
\item For every $\sigma, \tau \in \binom{[k]}{d}$,
\[\max\{\pi_d(x): x \in A_\sigma\} < \max\{\pi_d(x): x \in A_\tau\}\]
if the maximal element of $\sigma$ is smaller than the maximal element of $\tau$. Note that the above maximum exists as $A_{\sigma}$ and $A_{\tau}$ are both compact.
\end{enumerate}

Let $\ell = k+1-r$. We will define a hyperplane $H_{k+1}$ with normal vector $u_{k+1}$ and a closed convex set $A_{k+1}\subset \mathbb{R}^d$ that satisfies the above (i)--(iv).

For each $\sigma \in \binom{[k]}{d}$, let $t_\sigma = \max\{\pi_d(x):x\in A_\sigma\}$.
In order to ensure (iv), we take a vector $y \in \mathbb{R}^d$ with large $d$-th coordinate such that $\pi_d(y) > t_\sigma$ for any $\sigma \in \binom{[k]}{d}$. 
Let $H$ be the hyperplane passing through $y$ orthogonal to $e_d$:
\[H = \{z \in \mathbb{R}^d: \langle z-y,e_d\rangle = 0\},\] where $\langle a,b\rangle$ is the Euclidean inner product of two vectors $a, b \in \mathbb{R}^d$.
Note that for any point $z$ on the hyperplane $H$, we have $\pi_d(z) = \pi_d(y) > t_\sigma$ for $\sigma \in \binom{[k]}{d}$.

In order to ensure (iii) later, we wish to take $A_{k+1}$ so that it intersects $A_{\sigma}$ for $\sigma \in \binom{[k]}{d}$ if and only if $\sigma$ contains a number larger than equal to $\ell$. As (iv) holds for $H_1,\dots, H_k$, to check that (iii) remains true with $A_{k+1}$ added, we only have to consider $(k+1)$-tuples containing $k+1$ and  $\sigma \in \binom{[\ell]}{d}$.
For this, let 
\[ t = \min\{t_\sigma:\sigma\in\binom{[\ell]}{d} \text{ and } \ell\in\sigma\} \quad \text{and} \quad t' = \max\{t_\sigma:\sigma\in\binom{[\ell-1]}{d}\}. \]
When $\ell-1 < d$, let $t'  = -\infty$.
Then by definition and (iv), we have $t' < t$, and hence we can take a positive real number $s_{k+1}$ such that $t' < \pi_d(y)-s_{k+1} < t$.
Let \[A = \bigcup_{0\leq s\leq s_{k+1}}\left(H-s\cdot e_d\right)\]
Clearly, if $H$ and $A$ were to play the roles of $H_{k+1}$ and $A_{k+1}$, respectively,  then (ii), (iii), and (iv) hold. However, the normal vector of $H$ is $e_d$, so (i) would not hold in this case.
See Figure~\ref{sec4-fig1} for an illustration when $d=2, r=1, k=4$.

\begin{figure}[htbp]
\centering
\includegraphics[scale=0.67]{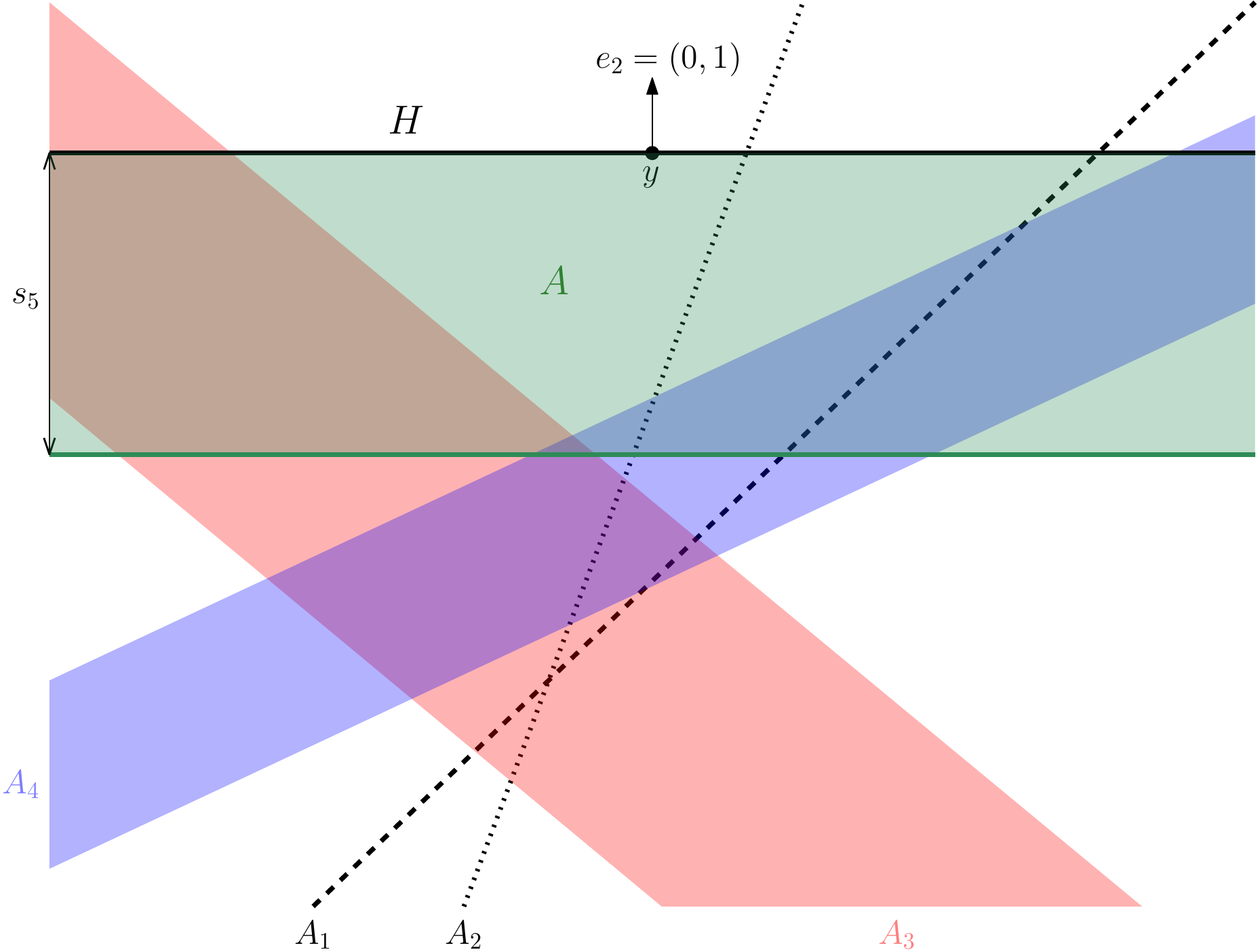}
\caption{If $A=A_{k+1}$ then conditions (ii), (iii), and (iv) hold.}
\label{sec4-fig1}
\end{figure}

In order to ensure (i) while keeping (ii)--(iv), we slightly perturb $H$ and $A$.
Take $u_{k+1} = e_d + (\epsilon_1,\epsilon_2,\ldots,\epsilon_d) \in\mathbb{R}^d$ for some $\epsilon_i \in \mathbb{R}$, $i \in [d]$, and let
\[H_{k+1} = \{z \in \mathbb{R}^d: \langle z-y,u_{k+1}\rangle = 0\}\;\;\;\text{and}\;\;\;A_{k+1} = \bigcup_{0\leq s \leq s_{k+1}}(H_{k+1}-s\cdot e_d).\]
Since $k$ is finite, we can take $\epsilon_i$'s with $|\epsilon_i|$ small enough, so that (i) holds while (ii)--(iv) still hold.  
See Figure~\ref{sec4-fig2} for an illustration of such modification to the example in Figure~\ref{sec4-fig1}.
\begin{figure}[htbp]
\centering
\includegraphics[scale=0.67]{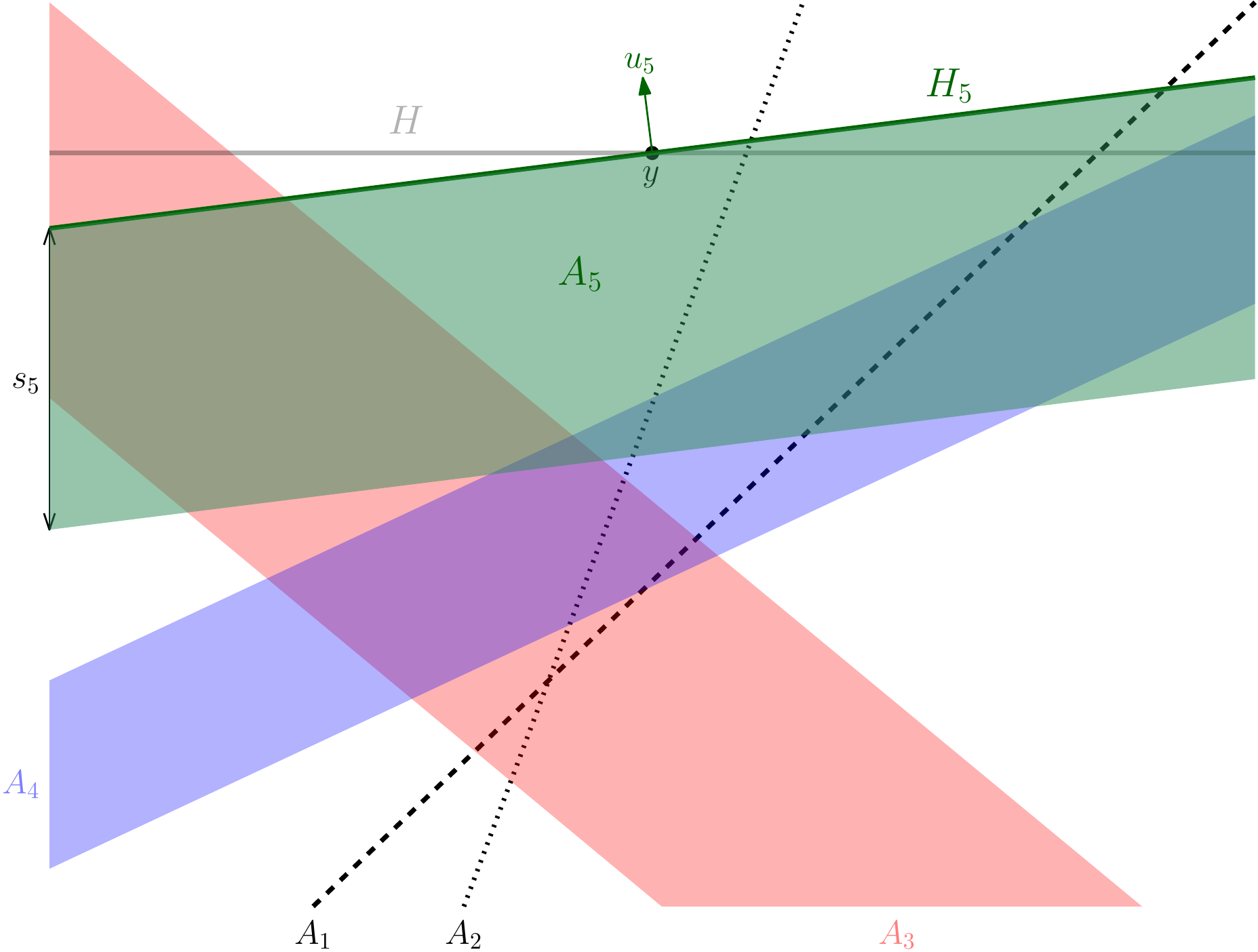}
\caption{$A_5$ is obtained by modifying $A$. It satisfies (i), (ii), (iii), and (iv).}
\label{sec4-fig2}
\end{figure}
Repeat this process until we obtain $A_n$. This completes the proof due to (iii).
\end{proof}

\subsection{Extremal example of Cartesian products of convex sets in $\mathbb{R}^d$}
Based on the construction in Section~\ref{subsec:d-repr}, we can also construct a subfamily of $\mathcal{F}_{t,d}$ that does not contain a large subfamily without intersecting $(d+1)$-tuples and satisfies an almost optimal fractional Helly property for $(d+1)$-tuples. We construct this subfamily to prove Theorem~\ref{thm:frachel_genbox2}.

\begin{proof}[Proof of Theorem~\ref{thm:frachel_genbox2}]
Let $n_1,\dots, n_t$ and $r_1,\dots, r_t$ be integers so that 
$n_1+\dots + n_t = n$ and $r_1 + \dots + r_t = r-(t-1)d$, and moreover
$ n_1\leq \dots \leq n_t \leq n_1+1$ and $r_1\leq \dots \leq r_t \leq r_1+1$.
In particular, then each $r_i$ is either $\left\lfloor\frac{r-(t-1)d}{t}\right\rfloor$ or $\left\lceil\frac{r-(t-1)d}{t}\right\rceil$ and we have
$$\sum_{i\in [t]} (n_i-r_i) = n-r+(t-1)d \enspace \text{ and }  \enspace
\left| (n_i-r_i) - (n_j-r_j) \right|\leq 1 \text{ for all } i,j\in [t].$$
For each $i\in [t]$, by Theorem~\ref{thm:d-repr_example}, there exists a family $\mathcal{F}'_i$ of $n_i$ convex sets in $\mathbb{R}^d$ such that the following holds.
\begin{enumerate}[(a)]
    \item Any family of more than $d + \frac{n_i -d}{r_i+1}$ convex sets in $\mathcal{F}'_i$ contains an intersecting $(d+1)$-tuple,
    \item the maximal size of an intersecting subfamily of $\mathcal{F}'_i$ is at most $d+r_i$, and
    \item $\mathcal{F}'_i$ contains exactly $\binom{n_i-r_i}{d+1}$ non-intersecting $(d+1)$-tuples.
\end{enumerate}
Recall that all $d$-tuples in the family in Theorem~\ref{thm:d-repr_example}, hence also those in $\mathcal{F}_i'$, are intersecting.
For $A\in \mathcal{F}'_i$, let $\pi^{-1}_i(A)$ be the set of all points $x\in (\mathbb{R}^{d})^t$ whose $i$-th coordinate projection $\pi_i(x)$ lies in $A$. 
 In other words, $\pi^{-1}_i(A)$ is the Cartesian product of $A\in \mathcal{F}'_i$ and $t-1$ copies of $\mathbb{R}^d$ so that $i$-th coordinate projection of the product is $A$.
Let 
$$\mathcal{F}= \bigcup_{ i\in [t]} \{ \pi^{-1}_i(A): A\in \mathcal{F}'_i\}.$$
Thus $\mathcal{F}$ has size $|\mathcal{F}|=\sum_{i\in[t]}|\mathcal{F}_i'|=n$.

Note that any subfamily $\mathcal{F}^*$ of $d+1$ sets in $\mathcal{F}$ are intersecting if 
not all of them come from the same $\mathcal{F}'_i$, i.e. $\mathcal{F}^* \not\subset  \{ \pi^{-1}_i(A): A\in \mathcal{F}'_i\}$ for any $i$. 
For each $i\in [t]$, as we know $n_i\leq \frac{n+t-1}{t}$ and $r_i\geq \frac{r - (t-1)d-(t-1)}{t}$, we have
\[ d+ \frac{n_i -d}{r_i+1}< d + \frac{n - t(d-1)}{r-(t-1)d}.\]
Consider a subfamily $\mathcal{F}^*$ of at least $d + \frac{n-t(d-1)}{r-(t-1)d}$ sets in $\mathcal{F}$. If not all of them come from the same $\mathcal{F}'_i$, then it contains an intersecting $(d+1)$-tuple. If all of them lie in $\{ \pi^{-1}_i(A): A\in \mathcal{F}'_i\}$, as $|\mathcal{F}^*| > d+ \frac{n_i -d}{r_i+1}$, (a) implies that this contains an intersecting $(d+1)$-tuple.

As any $d+1$ sets not coming from the same $\mathcal{F}'_i$ are intersecting, 
(b) implies that the largest family of intersecting sets has size at most $\sum_{i\in [t]} (d+r_i) = d + r$.
Moreover, as all non-intersecting $(d+1)$-tuples come from the same $\mathcal{F}'_i$, (c) implies that the number of non-intersecting $(d+1)$-tuples in $\mathcal{F}$ is exactly
$\sum_{i\in [t]} \binom{n_i-r_i}{d+1}$. Hence, the number intersecting $(d+1)$-tuples in $\mathcal{F}$ is 
$$\binom{n}{d+1} - \sum_{i\in [t]} \binom{n_i-r_i}{d+1} = g_d(n,t,r).$$
The equality holds as $n_i-r_i$ sums up to $n-r+(t-1)d$ and $|(n_i-r_i)- (n_j-r_j)|\leq 1$ for all $i,j\in [t]$. This proves the theorem.
\end{proof}

\section{Concluding remarks}\label{sec:rmk}
\subsection{Induced density vs forbidden blowup}
Given a family $\mathcal{F}$ of non-empty sets, the {\em colorful Helly number} is the maximal integer $m$ such that there exists finite subfamilies $\mathcal{F}_1,\ldots,\mathcal{F}_{m-1}$ of $\mathcal{F}$ such that each $\mathcal{F}_i$ is not intersecting and for all $A_i \in \mathcal{F}_i$, the intersection $\bigcap_{i\in[m-1]}A_i$ is non-empty.
By taking all $\mathcal{F}_i$'s identical, one can see that the colorful Helly number is always greater than or equal to the Helly number.
The colorful Helly theorem~\cite{Bar82}, which is another important generalization of Helly's theorem, asserts that the family of all convex sets in $\mathbb{R}^d$ has colorful Helly number $d+1$.
See also \cite{KM05} for the colorful Helly theorem for $d$-Leray complexes.

Kim~\cite{Kim17} recently gave an alternative way to show the robustness of the fractional Helly theorem, that is, $\beta_d$ tends to $1$ as $\alpha$ tends to $1$, using the colorful Helly theorem as a blackbox, see also \cite{BFMOP14, BGT21} for related works.
Improving the idea in~\cite{Kim17}, Holmsen \cite{Hol20} proved an extremal graph theoretic result, extending the work of Gy\'arf\'as, Hubenko and Solymosi~\cite{GHS02} to hypergraphs. It roughly states that 
dense hypergraphs with certain forbidden configuration must contain a linear-size clique. In particular, this result implies that the fractional Helly theorem can be derived from the colorful Helly theorem in a purely combinatorial way.
Such work is of great interest, as it has been one of the most fundamental question in Helly type problems to find sufficient combinatorial conditions to give some fractional Helly type result for abstract set-systems, see e.g. \cite{Mat04, Pat20, HL21, GHP21} for such works.

Write $K_t^{(2)}$ for the \emph{2-blowup} of $K_t$. That is, $K_t^{(2)}=K_{2,\ldots, 2}$ is the complete $t$-partite graph with each part of size 2. Holmsen's result~\cite{Hol20} for graphs reads as follows. 

\begin{theorem}[\cite{Hol20}] \label{thm:ind-blowup} 
  Let $G$ be a graph with no induced copy of $K_t^{(2)}$. If $G$ has positive $K_t$-density, then there is a linear-size clique in $G$.
\end{theorem}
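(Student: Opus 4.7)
The plan is to prove the statement by induction on $t$. The base case $t = 2$ is exactly the Gy\'arf\'as--Hubenko--Solymosi theorem cited in the paragraph above: since $K_2^{(2)} = C_4$, that result says an $n$-vertex graph with no induced $C_4$ and with $\Omega(n^2)$ edges contains a clique of linear size.

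For the inductive step, assume the theorem for $t-1$, and let $G$ be an $n$-vertex graph with no induced $K_t^{(2)}$ and with at least $\alpha\binom{n}{t}$ copies of $K_t$. Averaging over vertices, some vertex $v$ lies in at least $\alpha\binom{n-1}{t-1}$ copies of $K_t$, so $G[N(v)]$ contains at least that many copies of $K_{t-1}$. A Kruskal--Katona type lower bound then gives $|N(v)| = \Omega(n)$ and hence $K_{t-1}$-density bounded below by a positive function of $\alpha$ alone in $G[N(v)]$. If $G[N(v)]$ is itself induced $K_{t-1}^{(2)}$-free, the inductive hypothesis produces a clique of size $\Omega(|N(v)|) = \Omega(n)$ inside $N(v)$, which together with $v$ is the desired linear clique in $G$.

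The crux is the case in which $G[N(v)]$ does contain an induced $K_{t-1}^{(2)}$ on some vertex set $U = \{u_1, u_1', \dots, u_{t-1}, u_{t-1}'\}$. The key leverage from the hypothesis is that if some $w \notin N[v]$ were adjacent to every vertex of $U$, then the partition $\{v,w\}, \{u_1,u_1'\}, \dots, \{u_{t-1},u_{t-1}'\}$ would realise an induced copy of $K_t^{(2)}$ in $G$; hence the common neighborhood $N_G(U)$ is forced into $N[v]$. This is a sharp structural restriction on how induced $K_{t-1}^{(2)}$'s can sit inside neighborhoods. I would turn this into progress by a cleanup procedure: either weight the averaging step to favor pivots $v$ whose neighborhoods avoid induced $K_{t-1}^{(2)}$, or iteratively delete vertex sets producing such obstructions, arriving at a smaller induced subgraph $G^\star \subseteq G$ in which the chosen pivot has an induced $K_{t-1}^{(2)}$-free neighborhood and the induction applies.

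The main obstacle, which I expect to be the technical core of the argument, is quantifying this cleanup: showing that the deletion process terminates after discarding only $o(n)$ vertices (or at worst a bounded fraction of $K_t$-copies), so that $G^\star$ still has positive $K_t$-density and a linear-size vertex set. This should be achievable by a double-counting or potential-function argument that plays the structural restriction $N_G(U)\subseteq N[v]$ off against the $\Omega(n^t)$ lower bound on $K_t$-copies, in the same spirit as Holmsen's original treatment and analogous to the neighborhood-partition arguments underlying the Gy\'arf\'as--Hubenko--Solymosi base case.
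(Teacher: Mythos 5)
Your proposal has a genuine gap at exactly the point you identify as ``the technical core'': the case in which $G[N(v)]$ contains an induced $K_{t-1}^{(2)}$ is never actually handled. The structural observation you extract is correct (if $U$ spans an induced $K_{t-1}^{(2)}$ inside $N(v)$, then any common neighbour of $U$ outside $N[v]$ would complete an induced $K_t^{(2)}$ together with $v$, so $N_G(U)\subseteq N[v]$), but it is not fed into any counting. The proposed ``cleanup'' --- reweighting the averaging towards pivots with $K_{t-1}^{(2)}$-free neighbourhoods, or iteratively deleting vertices to destroy the obstructions --- is only stated as a hope: you give no argument that such pivots exist, nor that the deletion process removes only $o(n)$ vertices or only a bounded fraction of $K_t$-copies. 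A priori, making a single neighbourhood induced-$K_{t-1}^{(2)}$-free can require deleting a constant fraction of it (to kill every induced copy one must hit all but $t-2$ of the ``non-edge pairs'' that pairwise see each other), and the containment $N_G(U)\subseteq N[v]$ by itself says nothing about how many vertices must go or how many $K_t$'s survive. Until that quantitative step is supplied, the induction does not close; the admitted obstacle is precisely the theorem's content, so what you have is a reduction of the problem to an unproved claim rather than a proof.

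For comparison, the argument sketched in the paper avoids any such induction or cleanup. It applies Szemer\'edi's regularity lemma: positive $K_t$-density forces a $K_t$ in the reduced graph; if none of the corresponding clusters were very dense one could embed an induced $K_t^{(2)}$ across them, so some cluster has edge density at least $1-1/(100t)$; after discarding low-degree vertices one gets a linear-size set with minimum degree $(1-1/(10t))$ times its order, inside which one greedily extracts non-adjacent pairs from successive common neighbourhoods --- this either builds an induced $K_t^{(2)}$ (contradiction) or halts early, yielding a linear-size clique. If you want to salvage your inductive scheme, you would need a lemma of comparable strength (e.g.\ that a graph with many $K_{t-1}$'s and many induced $K_{t-1}^{(2)}$'s in a common neighbourhood forces an induced $K_t^{(2)}$ after only few deletions), and that is where the real work lies.
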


We sketch a conceptually simpler proof of Theorem~\ref{thm:ind-blowup} using Szemer\'edi regularity lemma. We use some standard terminologies used in the literature, see, e.g. \cite{KSSS02}. Regularity lemma asserts that any graph $G$ can be decomposed into bounded number (say $k$) of parts so that most of the pairs of parts have certain pseudorandom properties. The reduced graph $R$ is a graph on vertex set $[k]$, where $ij$ is an edge if and only if the bipartite graph between $i$-th and $j$-th parts is random-like. By a standard embeddimg lemma, if $G$ has positive density of $K_t$, then the reduced graph $R$ contains a copy of $K_t$. An elementary argument shows that if all the parts corresponding to a copy of $K_t$ in $R$ are not dense enough, then one can embed $K_t^{(2)}$ using these parts in an obvious way. Thus, there must exist a part $V$ which is very dense, say having edge-density at least $1-1/(100t)$. By keep deleting vertices of low degree if exists, one can obtain a subset $V'\subseteq V$ with $|V'|\geq |V|/(100t)$ satisfying $\delta(G[V'])\geq (1- 1/(10t)) |V'|$. With this minimum degree condition, we can keep taking non-edges $u_iv_i$ from the common neighborhood of $u_1,v_1,\dots, u_{i-1}, v_{i-1}$ within $V'$ if exists. Note that the minimum degree condition ensures that the common neighborhood has size at least $|V'|/2$ if $i\leq t$. This either provides an induced copy of $K_t^{(2)}$ or a linear size clique, yielding the desired result.

\subsection{Fractional Helly properties of abstract set-systems}
Let $c \in [0,1)$ be a real number, $r > 1$ be an integer, and $\mathcal{F}$ be a family of non-empty sets.
We say $\mathcal{F}$ satisfies a {\em fractional Helly property for $r$-tuples over $(c,1]$} if it satisfies the following:
there exists $\gamma:(c,1]\to(0,1]$ such that for every $\alpha \in (c,1]$ and finite subfamily $\mathcal{H}$ of $\mathcal{F}$, if at least $\alpha\binom{|\mathcal{H}|}{r}$ of the $r$-tuples of $\mathcal{H}$ are intersecting, then $\mathcal{H}$ contains an intersecting subfamily of size at least $\gamma(\alpha)|\mathcal{H}|$.

In general, bounded Helly number may not guarantee any fractional Helly property.
To see this, consider the following concept.
Given a family $\mathcal{F}$ of sets, the {\em intersection graph} is a graph with vertex set $\mathcal{F}$ whose edge sets is the set of all intersecting pairs of $\mathcal{F}$.
Observe that any graph can be represented as the intersection graph of a family of non-empty sets with Helly~number~$2$.
Let $G$ be a graph on $V$ and let $\mathcal{C}$ be the set of all maximal cliques of $G$.
For each $v \in V$, let $C_v$ be the set of all maximal cliques of $G$ that contain $v$, that is, $C_v = \{C \in \mathcal{C}: v \in C\}$.
Observe that, for a vertex subset $W$, $\bigcap_{v \in W}C_v \neq \varnothing$ if and only if $W$ is a clique in $G$.
This implies that the family $\mathcal{G} = \{C_v: v \in V\}$ has Helly number $2$ and that the intersection graph of $\mathcal{G}$ is isomorphic to $G$.

Now, let $\mathcal{F}$ be a family of non-empty sets with Helly number $2$, whose intersection graph is the disjoint union of all possible graphs.
Then, for every integer $r \geq 2$, there is no $c \in [0,1)$ such that $\mathcal{F}$ satisfies the fractional Helly property for $r$-tuples over $(c,1]$.
For example, let $\mathcal{F}_m$ be a subfamily of $\mathcal{F}$ whose intersection graph is isomorphic to the complete $m$-partite graph $K_{m,m,\ldots,m}$.
Clearly, $\mathcal{F}_m$ consists of $m^2$ members and the maximal size of an intersecting subfamily of $\mathcal{F}$ is $m = o(|\mathcal{F}_m|)$.
On the other hand, there are exactly $\binom{m^2}{2}-m\binom{m}{2} = (1-o(1))\binom{|\mathcal{F}_m|}{2}$ intersecting pairs in $\mathcal{F}$.

The crucial reason why Proposition~\ref{thm:frachel_smallhelly} holds is that any family of convex sets in $\mathbb{R}^d$ satisfies the fractional Helly property for $(d+1)$-tuples over $(0,1]$.
In this point of view, we can reformulate Proposition~\ref{thm:frachel_smallhelly} in a slightly generalized form as follows.
\begin{proposition}\label{thm:frachel_smallhelly_abstract}
Let $k$ and $r$ be positive integers such that $k \geq r \geq 2$ and let $\alpha>0$. Then there exists $c_{k,r,\alpha} \in [0,1)$ such that the following holds:
for every finite family $\mathcal{F}$ of non-empty sets with Helly number $r$, if $\mathcal{F}$ satisfies the fractional Helly property for $k$-tuples over $(\alpha,1]$, then it satisfies the fractional Helly property for $r$-tuples over $(c_{k,r,\alpha},1]$.
\end{proposition}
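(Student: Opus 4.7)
The plan is to lift the density of intersecting $r$-tuples to the density of intersecting $k$-tuples using the Helly-number-$r$ condition, and then invoke the hypothesis to produce a large intersecting subfamily. The key observation is the standard consequence of Helly number $r$: for any $k$-tuple $\mathcal{H}\subseteq\mathcal{F}$ with $k\geq r$, the family $\mathcal{H}$ is intersecting if and only if every $r$-subfamily of $\mathcal{H}$ is intersecting. Equivalently, every non-intersecting $k$-tuple contains at least one non-intersecting $r$-subtuple.

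Given this, the core of the argument is a two-line double count. Let $\mathcal{G}\subseteq\mathcal{F}$ be a finite subfamily with $n:=|\mathcal{G}|$, and suppose the fraction of intersecting $r$-tuples in $\mathcal{G}$ is $\alpha'$. Each non-intersecting $r$-tuple lies in exactly $\binom{n-r}{k-r}$ many $k$-tuples, so using the identity $\binom{n}{r}\binom{n-r}{k-r}=\binom{n}{k}\binom{k}{r}$, the fraction of non-intersecting $k$-tuples in $\mathcal{G}$ is at most $\binom{k}{r}(1-\alpha')$. Consequently, the fraction of intersecting $k$-tuples is at least $1-\binom{k}{r}(1-\alpha')$.

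I would then set
$$c_{k,r,\alpha}:=1-\frac{1-\alpha}{\binom{k}{r}}\in[0,1),$$
which lies in the correct range provided $\alpha\in(0,1)$ (the case $\alpha=1$ is vacuous in the hypothesis). Whenever $\alpha'>c_{k,r,\alpha}$, the bound above yields a fraction of intersecting $k$-tuples strictly greater than $\alpha$, so the assumed $k$-tuple fractional Helly property produces an intersecting subfamily of size $\gamma_k(\alpha^{*})n$, where $\alpha^{*}\geq 1-\binom{k}{r}(1-\alpha')$ and $\gamma_k$ is the positive function guaranteed by the hypothesis. Taking $\gamma(\alpha'):=\gamma_k\bigl(1-\binom{k}{r}(1-\alpha')\bigr)$ closes the argument.

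The only mildly subtle point I anticipate is verifying the equivalence ``$\mathcal{H}$ intersects iff every $r$-subfamily of $\mathcal{H}$ intersects'' starting from the paper's phrasing of Helly number (the minimal size of a minimal non-intersecting subfamily). The ``only if'' direction is immediate, and the ``if'' direction follows by taking a minimal non-intersecting subfamily of any putative counterexample $\mathcal{H}$ and contradicting the Helly-number lower bound on its size. Once this equivalence is in hand, the rest reduces to the binomial bookkeeping above, and the correct choice of $c_{k,r,\alpha}$ is precisely the point at which the $\binom{k}{r}$-loss in the double-count matches the input threshold $\alpha$; I do not expect any genuine obstacle beyond this.
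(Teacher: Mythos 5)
Your proposal is correct, and it is in fact more complete than the paper's own two-sentence sketch, though the overall strategy is the same: boost the hypothesised $r$-tuple density far enough above the threshold that the assumed fractional Helly property for $k$-tuples kicks in. The paper's sketch routes this supersaturation step through the $1$-skeleton of the nerve: it argues that as $c_{k,r,\alpha}\to 1$ the edge-density, and hence the $K_k$-density, of the graph of intersecting pairs tends to $1$, and then applies the $k$-tuple property. Read literally this leaves a step implicit, since a $K_k$ in the intersection graph is only a pairwise-intersecting $k$-tuple, which for Helly number $r>2$ need not be intersecting; one still has to count the $k$-tuples containing a non-intersecting $r$-subtuple, which is exactly the double count you perform. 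Your route skips the graph entirely: using Helly number $r$ to certify that every non-intersecting $k$-tuple contains a non-intersecting $r$-subtuple, the identity $\binom{n}{r}\binom{n-r}{k-r}=\binom{n}{k}\binom{k}{r}$ gives intersecting $k$-tuple density at least $1-\binom{k}{r}(1-\alpha')$, yielding the explicit threshold $c_{k,r,\alpha}=1-(1-\alpha)/\binom{k}{r}$ and the explicit function $\gamma(\alpha')=\gamma_k\bigl(1-\binom{k}{r}(1-\alpha')\bigr)$, where the paper only asserts existence by "taking $c_{k,r,\alpha}\to 1$". Your caveats are also appropriate: the reduction from the paper's phrasing of Helly number to "a $k$-tuple intersects iff all its $r$-subtuples do" is exactly the right lemma (the paper's definition should be read with the standard maximality convention), and the restriction to $\alpha<1$ matches the paper's implicit assumption, since for $\alpha=1$ the hypothesis is vacuous and the statement cannot hold as written.
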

\begin{proof}[Sketch of proof]
Let $G$ be the graph consisting of all 1-dimensional faces of $\mathcal{F}$. Note that by taking $c_{k,r,\alpha}\rightarrow 1$, the edge-density of $G$, hence also the $K_k$-density of $G$, tends to 1. We can then apply the fractional Helly property for $k$-tuples over $(\alpha,1]$.
\end{proof}

\subsection{The number of higher dimensional faces in $\mathcal{F}_{t,d}$}
A more general form of \eqref{leray_frachel} for higher dimension is known: for every $j \geq d$,
\[f_j(K) \leq \sum_{i=0}^{d}\binom{n-r}{i}\binom{r}{j+1-i}.\]
It would be interesting to find the tight upper bound for $f_j(K)$ for $j > d$, when $K$ is the nerve of a finite subfamily of $\mathcal{F}_{t,d}$.
When $d = 1$, it was shown in \cite{Eck88} that the construction in Remark~\ref{rmk:tightness} has the maximum number of intersecting $(j+1)$-tuples for every $j \geq d$. 
It was observed by Lew~\cite{Lew} that Eckhoff's arguments also applies to the intersection of $t$ many $1$-Leray complexes.
A natural guess is that the same holds for the intersections of $d$-Leray complexes for all $d>1$.

We conclude this discussion with a reformulation of a more general problem posed by B\'{a}r\'{a}ny and Kalai~\cite[Problem 3.8]{BK21}.
\begin{prob}[\cite{BK21}]\label{prob:hnumber2}
For every positive integers $d,k,n,r$ with $n > d+r, k \geq 2$, find the minimum integer $T(d,k,n,r)$ such that the following holds: for every finite family $\mathcal{F}$ of non-empty sets such that $\mathcal{F}$ has Helly number $2$ and the nerve of $\mathcal{F}$ is $d$-Leray, if more than $T(d,k,n,r)$ of the $(k+1)$-tuples of $\mathcal{F}$ are intersecting, then $\mathcal{F}$ contains an intersecting subfamily of size $d+r+1$.
\end{prob}

\subsection{Complexes with smaller dimensions}
Theorem~\ref{thm:leray_intersection} describes the fractional Helly property when $r \geq (t-1)d$.
It remains to investigate how $f_d(K)$ behaves when $r < (t-1)d$.
When $d = 1$, it was observed in \cite{Eck88} that the obvious upper bound follows from Theorem~\ref{thm:turan} is tight.
For the completeness of this remark, we include a proof.
\begin{theorem}
Let $K_1,K_2,\ldots,K_t$ be $1$-Leray complexes on $V$ with $|V| = n$, and let $K = \bigcap_{i\in[t]}K_i$.
If $\dim K < m \leq t$, then \[f_1(K) \leq \binom{n}{2} - s\binom{\left\lceil\frac{n}{m}\right\rceil}{2} - (m-s)\binom{\left\lfloor\frac{n}{m}\right\rfloor}{2},\]
where $s$ is a non-negative integer such that $0 \leq s < t$ and $n \equiv s$ modulo $m$.
Moreover, there exists such $K$ that satisfies the equality.
\end{theorem}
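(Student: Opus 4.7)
The plan is to reduce the upper bound to Turán's theorem (Theorem~\ref{thm:turan}) and then to write down an explicit tight construction. The key structural observation that makes the $d=1$ case collapse to a purely graph-theoretic statement is that $1$-Lerayness recovers the complex entirely from its $1$-skeleton.

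First I would observe that the $1$-skeleton $G$ of $K$, viewed as a graph on $V$, contains no clique of size $m+1$. Indeed, as noted in the paper, $1$-Lerayness of each $K_i$ implies that any $U \subseteq V$ with $\binom{U}{2} \subseteq K_i$ is itself a face of $K_i$. So if $U$ is a clique in $G$, then $\binom{U}{2} \subseteq K = \bigcap_i K_i$, whence $\binom{U}{2} \subseteq K_i$ for every $i$, hence $U \in K_i$ for every $i$, and therefore $U \in K$. A clique of size $m+1$ in $G$ would thus produce an $m$-dimensional face of $K$, contradicting $\dim K < m$. Since $G$ is a $K_{m+1}$-free graph on $n$ vertices, Theorem~\ref{thm:turan} (applied with $t$ replaced by $m$) gives
\[f_1(K) = |E(G)| \leq \binom{n}{2} - s\binom{\lceil n/m\rceil}{2} - (m-s)\binom{\lfloor n/m\rfloor}{2},\]
which is exactly $g_1(n,m,m-1)$.

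For tightness, I would partition $V = V_1 \cup \cdots \cup V_m$ into parts whose sizes are $\lceil n/m\rceil$ and $\lfloor n/m\rfloor$, with exactly $s$ of them of the larger size. For $j \in [m]$, define $K_j$ on $V$ by declaring $U$ to be a face iff $|U \cap V_j| \leq 1$. Each such $K_j$ is the nerve of the family in $\mathbb{R}$ obtained by assigning $|V_j|$ distinct points to the vertices of $V_j$ and the whole line $\mathbb{R}$ to each vertex outside $V_j$; in particular each $K_j$ is $1$-Leray. For $m < j \leq t$ (if any), take $K_j = 2^V$, the full simplex, which is trivially $1$-Leray. Then $K = \bigcap_{j=1}^{t} K_j$ is the clique complex of the Turán graph $T(n,m)$: its maximal faces are transversals of the partition, so $\dim K = m-1 < m$, and
\[f_1(K) = \binom{n}{2} - \sum_{j=1}^{m}\binom{|V_j|}{2} = g_1(n,m,m-1).\]

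The main potential obstacle is simply confirming that the reduction through $1$-Lerayness is valid and that the construction realises each $K_j$ as a genuine nerve of convex sets in $\mathbb{R}$; once this is done, both the upper bound and tightness are essentially free from Turán's theorem. No delicate optimisation (as in Proposition~\ref{prop:jensen}) is required here, since the edge count is directly the Turán extremal number.
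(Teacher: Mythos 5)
Your proposal is correct and follows essentially the same route as the paper: the upper bound comes from observing that $1$-Lerayness makes $K$ the clique complex of its $1$-skeleton, so $\dim K < m$ forces a $K_{m+1}$-free graph and Tur\'an's theorem applies, and tightness comes from the same balanced partition construction realised as nerves of points and copies of $\mathbb{R}$. Your only addition is explicitly padding with $t-m$ full simplices to reach $t$ complexes, a minor point the paper leaves implicit.
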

\begin{proof}
Let $G$ be a graph on $V$ whose edge set is the set of all $1$-dimensional faces of $K$.
Recall that, as mentioned in Remark~\ref{rmk:Hellynumber}, a vertex subset $W$ with $|W| \geq 2$ is a face in $K$ if and only if every pair in $W$ is a face in $K$.
Thus the condition $\dim K < m$ implies that $G$ has no clique of size $m + 1$.
Then upper bound immediately follows from Theorem~\ref{thm:turan}.

For the equality of the upper bound, consider a vertex partition $V = V_1 \cup V_2 \cup \cdots \cup V_m$ such that $|V_i| = \left\lceil\frac{n}{m}\right\rceil$ for $1 \leq i \leq s$ and $|V_i| = \left\lfloor\frac{n}{m}\right\rfloor$ for $s+1 \leq i \leq m$.
For each $i \in [m]$, let $K_i$ be the complex on $V$ such that $W \subset V$ is a face in $K_i$ if and only if $|W \cap V_i| \leq 1$.
Note that each $K_i$ can be express as the nerve of $|V|-|V_i|$ copies of $\mathbb{R}$ and $|V_i|$ distinct point on $\mathbb{R}$, and hence $K_i$ is $1$-Leray.
Let $K = \cap_{i\in[m]}K_i$. Then $K$ is a complex on $V$ such that $W \subset V$ is a face in $K$ if and only if $|W \cap V_i| \leq 1$ for each $i \in [m]$.
In particular, $f_1(K) = \binom{n}{2} - \sum_{i\in[m]}\binom{|V_i|}{2}$, as required.
\end{proof}

Note that for a constant $c$ the condition $\dim K < c$ in $d$-Leray complexes already provides an upper bound on the number of $d$-dimensional faces. Consider a $(d+1)$-uniform hypergraphs without any clique of size $c$, what would be the maximum possible number of edges in this hypergraph?
The answer to this question is the Tur\'{a}n number of the $(d+1)$-uniform complete hypergraph on $c$ vertices. In turn, this provides a natural upper bound on the number of $d$-dimensional faces in the intersection of $t$ many $d$-Leray complexes with $\dim K <(t-1)d$. Would this upper bound be tight?
As the Tur\'{a}n number of such a complete hypergraph is not even known for $d>1$, and it is expected that the extremal hypergraphs have more complicated structures than the constructions in Theorems~\ref{thm:d-repr_example} and \ref{thm:frachel_genbox2}. So, it seems unlikely that the bound supposedly provided from the hypergraph Tur\'{a}n number is tight for the family of convex sets in $\mathcal{F}_{t,d}$ or even in $\mathbb{R}^d$.

\subsection{$(p,q)$-condition for constant $p$}
In Theorem~\ref{thm:frachel_genbox2}, we constructed a family of convex sets in $\mathcal{F}_{t,d}$ that satisfies $(C_{\alpha,t,d},d+1)$-condition. We showed that $(C_{\alpha,t,d},d+1)$-condition does not provide any improvement on the fractional Helly property for $(d+1)$-tuples when $\alpha\binom{|\mathcal{F}|}{d+1}$ intersecting $(d+1)$-tuples are present for $\alpha \in (1- \frac{1}{t^d},1].$
However, would such $(p,q)$-condition extend the range of $\alpha$ for the fractional Helly property for $(d+1)$-tuples?
More precisely, does there exists $\delta < 1 - \frac{1}{t^d}$ such that the families of convex sets satisfying $(p,d+1)$-condition with bounded $p$ satisfy a fractional Helly property for $(d+1)$-tuples over $(\delta,1]$, instead of over $(1-\frac{1}{t^d},1]$?

This follows from the $(p,q)$-theorem for convex sets due to Alon and Kleitman~\cite{AK92}.
Roughly speaking, the $(p,q)$-theorem asserts that, if a family of convex sets satisfies a sufficiently strong intersection property, then the whole family can be ``pierced'' by few points.
Here is a precise statement of the $(p,q)$-theorem.
\begin{theorem}[$(p,q)$-theorem, \cite{AK92}]\label{thm:pq}
For every positive integers $d$, $p$, and $q$ with $p \geq q \geq d+1$, there exists a positive integer $N = N(d;p,q)$ such that the following holds:
for every finite family $\mathcal{F}$ of convex sets in $\mathbb{R}^d$ that satisfies the $(p,q)$-condition, there exists a set of at most $N$ points in $\mathbb{R}^d$ that meets all members of $\mathcal{F}$.
\end{theorem}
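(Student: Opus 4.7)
The plan is to follow the classical strategy of Alon and Kleitman, combining the fractional Helly theorem for convex sets in $\bR^d$ with the weak $\eps$-net theorem. The central quantity is the \emph{fractional transversal number} $\tau^*(\cF)$, defined by the LP that minimizes $\int w$ over nonnegative weight functions $w$ on $\bR^d$ subject to $\int_A w \geq 1$ for every $A \in \cF$; by LP duality this equals the \emph{fractional matching number} $\nu^*(\cF)$, obtained by maximizing $\sum_A z_A$ over nonnegative weights $z_A$ on $\cF$ with point-congestion $\sum_{A \ni x} z_A \leq 1$.

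The first and main step is to show that the $(p,q)$-condition forces $\tau^*(\cF) \leq M$ for a constant $M = M(d,p,q)$. Assume for contradiction that $\nu^*(\cF) = \tau^*(\cF)$ is much larger than $M$, and fix an optimal fractional matching $z$. Averaging the $(p,q)$-condition over random $p$-tuples drawn proportionally to $z$, one deduces that a positive $\alpha_0(p,q)$-fraction of $z$-weighted $q$-tuples is intersecting; since intersection is preserved under taking subsets, the same lower bound holds for $z$-weighted intersecting $(d+1)$-tuples. Applying the fractional Helly theorem in its weighted form to $z$ produces an intersecting subfamily $\mathcal{G} \subseteq \cF$ whose total $z$-weight is at least $\beta \cdot \nu^*(\cF)$ for some $\beta = \beta(d,p,q) > 0$. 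But then any common point of $\mathcal{G}$ has $z$-congestion at least $\beta\,\nu^*(\cF) > 1$ once $\nu^*(\cF)$ is sufficiently large compared with $M$, contradicting feasibility of $z$.

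The second step converts this fractional bound into an integer one via the weak $\eps$-net theorem of Alon--B\'ar\'any--F\"uredi--Kleitman: for any Borel probability measure $\mu$ on $\bR^d$ and any $\eps > 0$, there exists a finite set of at most $h(d,\eps)$ points meeting every convex set of $\mu$-measure $\geq \eps$. Normalizing an optimal fractional transversal of $\cF$ to a probability measure and applying this result with $\eps = 1/M$ yields a set of at most $h(d, 1/M) =: N(d;p,q)$ points piercing every member of $\cF$, as required.

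The principal obstacle is the first step: passing from the purely combinatorial $(p,q)$-condition to a bounded fractional transversal. This relies on a weighted variant of the fractional Helly theorem together with a careful parameter coupling between the $(p,q)$-condition and LP duality, and constitutes the conceptual heart of the Alon--Kleitman argument; the weak $\eps$-net step in contrast is essentially a black-box application.
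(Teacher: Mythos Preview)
The paper does not give its own proof of this statement: Theorem~\ref{thm:pq} is quoted as the Alon--Kleitman $(p,q)$-theorem with a citation to~\cite{AK92} and is used only as a black box (specifically, to derive Corollary~\ref{cor:pq}). So there is nothing in the paper to compare your argument against.

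That said, your outline is a faithful sketch of the original Alon--Kleitman proof: LP duality $\tau^*=\nu^*$, a counting/averaging argument to show that the $(p,q)$-condition forces a positive fraction of intersecting $(d+1)$-tuples in any (weighted or replicated) realization of an optimal fractional matching, fractional Helly to obtain a heavy intersecting subfamily contradicting the point-congestion bound, and finally weak $\eps$-nets to turn the bounded $\tau^*$ into a bounded integer transversal. One small comment: rather than invoking a ``weighted fractional Helly theorem'' directly, the standard route is to approximate the optimal fractional matching by a rational one and replicate sets accordingly, so that the ordinary fractional Helly theorem applies to a multiset; you may want to make that reduction explicit, since the $(p,q)$-condition needs to be checked for the replicated family (it holds because repeats only make the condition easier).
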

Note that Helly's theorem implies $N(d;d+1,d+1) = 1$.
See, for example, \cite{KST18,Rub21} for recent development related to the $(p,q)$-theorem.
See also \cite{AKMM02} for the $(p,q)$-theorem for abstract set-systems.

As a corollary of Theorem~\ref{thm:pq}, we can obtain the following.
\begin{cor}\label{cor:pq}
For every positive integers $d$, $p$, and $t$ with $p \geq d+1$, there exists a positive integer $M_t(d;p)$ such that the following holds:
for every finite family $\mathcal{F} \subset \mathcal{F}_{t,d}$ that satisfies the $(p,d+1)$-condition, there exists a set of at most $M_t(d;p)$ points in $\mathbb{R}^{td}$ that meets all members of $\mathcal{F}$.
\end{cor}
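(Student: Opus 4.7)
The plan is to reduce Corollary~\ref{cor:pq} to the classical $(p,q)$-theorem (Theorem~\ref{thm:pq}) applied separately in each coordinate factor, and then assemble the resulting piercing sets via a direct product construction. Concretely, I expect to prove the corollary with $M_t(d;p) := N(d;p,d+1)^t$, where $N$ is the function supplied by Theorem~\ref{thm:pq}.

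For each $i \in [t]$, let $\pi_i\colon \mathbb{R}^{td} \to \mathbb{R}^d$ denote the projection onto the $i$-th copy of $\mathbb{R}^d$. Given $\mathcal{F} \subset \mathcal{F}_{t,d}$ satisfying the $(p,d+1)$-condition, set $\mathcal{G}_i := \{\pi_i(A) : A \in \mathcal{F}\}$, regarded as a family of convex sets in $\mathbb{R}^d$. The first step is to verify that each $\mathcal{G}_i$ also satisfies the $(p,d+1)$-condition. Given any $p$ distinct members of $\mathcal{G}_i$, choose a preimage in $\mathcal{F}$ for each; these preimages are automatically distinct since their $i$-th projections are. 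Applying the $(p,d+1)$-condition on $\mathcal{F}$ to these $p$ members of $\mathcal{F}$ yields $d+1$ of them with nonempty common intersection, and projecting this intersection via $\pi_i$ produces $d+1$ members of $\mathcal{G}_i$ whose common intersection is nonempty. (If $|\mathcal{G}_i| < p$ the condition is vacuous.)

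The second step is to invoke Theorem~\ref{thm:pq} on each $\mathcal{G}_i$ to obtain a piercing set $P_i \subset \mathbb{R}^d$ with $|P_i| \leq N(d;p,d+1)$. The key and final observation is that the product set $P := P_1 \times P_2 \times \cdots \times P_t \subset \mathbb{R}^{td}$ pierces $\mathcal{F}$: for any $A = A_1 \times \cdots \times A_t \in \mathcal{F}$, pick $p_i \in P_i \cap A_i$ for each $i$ (which exists because $P_i$ pierces $\mathcal{G}_i \ni A_i$), and note that $(p_1, \ldots, p_t) \in P \cap A$. Since $|P| \leq N(d;p,d+1)^t$, the corollary follows.

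I do not anticipate a serious technical obstacle, as the argument is essentially a coordinate-wise application of a black-box theorem followed by a product construction. The only subtlety worth flagging is that the projection map $\mathcal{F} \to \mathcal{G}_i$ need not be injective, so the $(p,d+1)$-condition on $\mathcal{G}_i$ must be deduced by lifting distinct projected sets to distinct products in $\mathcal{F}$, as carried out above.
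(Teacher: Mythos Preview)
Your proposal is correct and follows essentially the same approach as the paper: project to each coordinate factor, verify the $(p,d+1)$-condition there, apply Theorem~\ref{thm:pq} in each factor, and take the Cartesian product of the resulting piercing sets to obtain $M_t(d;p) \le N(d;p,d+1)^t$. Your treatment is in fact slightly more careful than the paper's, as you explicitly address the non-injectivity of the projection when checking the $(p,d+1)$-condition on $\mathcal{G}_i$.
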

\begin{proof}
Let \[\mathcal{F} = \{A_{i,1}\times A_{i,2}\times\cdots\times A_{i,t} \subset \mathbb{R}^{td}: i\in[n]\},\] where $A_{i,j} \subset \mathbb{R}^d$ is a convex set in $\mathbb{R}^d$ for each $i \in [n]$ and $j \in [t]$.
Suppose $\mathcal{F}$ satisfies the $(p,d+1)$-condition.
Then, for each $j \in [t]$, the family $\mathcal{G}_j := \{A_{i,j}: i\in[n]\}$ also satisfies the $(p,d+1)$-condition, and hence there exists a set $S_i \subset \mathbb{R}^d$ of at most $N(d;p,d+1)$ points that intersects each member of $\mathcal{G}_j$.
Now, the set \[S := S_1 \times S_2 \times \cdots \times S_t \subset \mathbb{R}^d \times \mathbb{R}^d \times \cdots \mathbb{R}^d \simeq \mathbb{R}^{td}\] meets all members of $\mathcal{F}$.
This proves $M_t(d;p) \leq N(d;p,d+1)^t$.
\end{proof}

\begin{rmk}
Let $d$ and $p$ be positive integers such that $p \geq d+1$.
For every finite subfamily $\mathcal{F}$ of $\mathcal{F}_{t,d}$ satisfying $(p,d+1)$-condition, no matter how small $f_d(N(\mathcal{F}))$ is, Corollary~\ref{cor:pq} guarantees the existence of a subfamily of size at least $\frac{n}{M_t(d;p)}$.
Hence, this implies that such a family has a fractional Helly property for $(d+1)$-tuples over $(0,1]$. 

However, in order to conclude this, we might want to ask the following question: which values $\alpha$ in $(0,1]$ are realizable? In other words, for which $\alpha \in (0,1]$, there exists a family $\mathcal{F}\subseteq \mathcal{F}_{t,d}$ of $n$ sets satisfying $(p,d+1)$-condition while having $\alpha \binom{n}{d+1}$ intersecting $(d+1)$-tuples. For example, from the hypergraph Tur\'{a}n theorem, we know that such a family $\mathcal{F}$ must have at least $\binom{p-1}{d}^{-1} \binom{n}{d+1}$ intersecting $(d+1)$-tuples, hence $\alpha$ cannot be in the interval $(0,\binom{p-1}{d}^{-1})$. This bound is not tight even for general $(d+1)$-uniform hypergraphs.
Hence this leaves the following interesting question: if $\mathcal{F}$ satisfies $(p,d+1)$-condition, then how many intersecting $(d+1)$-tuples must it have? If our choice of $p$ ensures that this bound is smaller than $1-\frac{1}{t^d}$, then we would be able to say that the $(p,d+1)$-condition indeed ensures that the fractional Helly property for $(d+1)$-tuples holds over a larger range of $\alpha$.
\end{rmk}

Also, it is natural to ask the best possible $M_t(d;p)$.
This kind of questions has been studied for the case $d=1$, that is, for axis-aligned boxes: see, for example, \cite{CD20,CSZ18}.
We conclude the section with a well-known conjecture on $M_2(1;p)$.
\begin{question}[\cite{Weg65}]
Let $\mathcal{F}$ be a family of axis-aligned boxes in the plane.
If every $p$-tuple contains intersecting triple, then there exists a set of $2p-3$ points in the plane that meets all members of $\mathcal{F}$.
\end{question}
 
\section*{Acknowledgment}
The authors thank Alan Lew for introducing the problem about the intersection of $d$-Leray complexes.
The authors also thank Andreas Holmsen for suggesting an idea for the construction in Section~\ref{subsec:d-repr}.

\bibliographystyle{abbrv}
\bibliography{biblio}
\end{document}